\newcommand{\nwc}{\newcommand}
\newcommand{\com}{\mathbb{C}}
\newcommand{\cal}{\mathcal}
\newcommand{\op}[1]{{\mathcal O}_{\mathbb{P}^{#1}}}
\newcommand{\p}[1]{{\mathbb{P}^{#1}}}
\newcommand{\opn}{{\mathcal O}_{\mathbb{P}^{n}}}
\newcommand{\pn}{{\mathbb{P}^{n}}}
\newcommand{\tp}[1]{{\rm T}{\mathbb{P}^{#1}}}
\newcommand{\tpn}{{\rm T}{\mathbb{P}^{n}}}
\newcommand{\iz}{\mathcal{I}_{Z}}
\nwc{\aaa}{\sF }
\nwc{\aab}{\bar{\mathfrak{a}}}
\nwc{\aal}{\sF '}
\nwc{\aap}{\sF _{P}}
\nwc{\bbb}{\mathfrak{b}}
\nwc{\bbp}{\mathfrak{b}_{P}}
\nwc{\C}{\mathbb{C}}
\nwc{\cb}{\overline{C}}
\nwc{\ccc}{\mathcal{C}}
\nwc{\ch}{\widehat{C}}
\nwc{\cin}{\textbf{(v)}}
\nwc{\cl}{C'}
\nwc{\cp}{\mathcal{C}_{P}}
\nwc{\cpll}{\mathcal{C}_{P'}}
\nwc{\ct}{\tilde{C}}
\nwc{\dd}{\mathcal{D}}
\nwc{\ddd}{\mathfrak{d}}
\nwc{\ddl}{\mathcal{L}'}
\nwc{\dlp}{\delta_{P}}
\nwc{\doi}{\textbf{(ii)}}
\nwc{\ff}{\mathscr{F}}
\nwc{\G}{{\cal G}}
\nwc{\gon}{{\rm gon}}
\nwc{\gtl}{\tilde{g}}
\nwc{\gud}{g^{1}_{2}}
\nwc{\gtu}{g^{1}_{3}}
\nwc{\hhza}{H^{0}(C,\mathfrak{a})}
\nwc{\hua}{h^{1}(C,\mathfrak{a})}
\nwc{\hza}{h^{0}(C,\mathfrak{a})}
\nwc{\kk}{{\rm K}}
\nwc{\lbd}{\lambda}
\nwc{\lif}{L_{\infty}}
\nwc{\mm}{\mathfrak{m}}
\nwc{\mmp}{\mathfrak{m}_{P}}
\nwc{\mpd}{{\mathfrak{m}_{P}}^{2}}
\nwc{\N}{I\!\!N}
\nwc{\nn}{\mathbb{N}}
\nwc{\obp}{\overline{\mathcal{O}}_P}
\nwc{\ocbux}{\oo _{\bar{C}}\langle 1,x\rangle}
\nwc{\oclux}{\oo _{C'}\langle 1,x\rangle}
\nwc{\ocux}{\oo _{C}\langle 1,x\rangle}
\nwc{\ol}{\mathcal{O}'}
\nwc{\oma}{\Omega (\mathfrak{a})}
\nwc{\omo}{\Omega (\mathcal{O})}
\nwc{\oo}{\mathcal{O}}
\nwc{\ooh}{\widehat{\mathcal{O}}}
\nwc{\opc}{\mathcal{O}_{P,C}}
\nwc{\oph}{\widehat{\mathcal{O}}_{P}}
\nwc{\opl}{\mathcal{O}_{P}'}
\nwc{\oplc}{\mathcal{O}_{P,C}'}
\nwc{\opll}{\mathcal{O}_{P'}}
\nwc{\opt}{\tilde{\mathcal{O}}_{P}}
\nwc{\optt}{{\mathcal{O}}_{\tilde{P}}}
\nwc{\oq}{\mathcal{O}_{Q}}
\nwc{\oqt}{\tilde{\mathcal{O}}_{Q}}
\nwc{\ot}{\tilde{\mathcal{O}}}
\nwc{\overop}{\bar{\oo}_{P}}
\nwc{\pb}{\overline{P}}
\nwc{\pgmd}{\mathbb{P}^{g+2}}
\nwc{\pgmu}{\mathbb{P}^{g+1}}
\nwc{\pp}{\mathbb{P}}
\nwc{\prv}{\noindent\textbf{Proof}:}
\nwc{\pt}{\tilde{P}}
\nwc{\ptl}{\tilde{P}}
\nwc{\pum}{\mathbb{P}^{1}}
\nwc{\carta}{\mathfrak{U}}
\nwc{\Q}{\;\mbox{{\sf I}}\!\!\!Q}
\nwc{\qb}{\overline{Q}}
\nwc{\qtl}{\tilde{Q}}
\nwc{\qua}{\textbf{(iv)}}
\nwc{\R}{I\!\!R}
\nwc{\sep}{\beq\ast\ \ast\ \ast\enq}
\nwc{\spl}{{S_{P}}'}
\nwc{\spll}{S_{P'}}
\nwc{\ssp}{{\rm S}_{P}}
\nwc{\sss}{{\rm S}}
\nwc{\sys}{\mathcal{L}}
\nwc{\tre}{\textbf{(iii)}}
\nwc{\um}{\textbf{(i)}}
\nwc{\vlp}{\mathcal{V}_{\lambda,P}}
\nwc{\vpt}{v_{\ptl}}
\nwc{\vv}{\mathcal{W}}
\nwc{\vvp}{\mathcal{W}_{P}}
\nwc{\vpb}{v_{\overline{P}}}
\nwc{\vtxp}{\widetilde{V}_{x,P}}
\nwc{\vxp}{V_{x,P}}
\nwc{\vzp}{V_{z,P}}
\nwc{\wol}{\ww\cdot\mathcal{O}'}
\nwc{\wpn}{{\omega _{P}}^{n}}
\nwc{\wwt}{\widetilde{\omega}}
\nwc{\wwtp}{\widetilde{\omega}_P}
\nwc{\ww}{\omega}
\nwc{\wwp}{\omega _{P}}
\nwc{\Z}{{Z\!\!\!Z}}
\nwc{\zz}{\mathbb{Z}}
\newcommand{\Pic}{\operatorname{Pic}}
\def\sF{{\mathscr{F}}}
\DeclareMathOperator{\sing}{Sing}
\newcommand{\dist}{{\mathcal D}{\it ist}}
\newcommand{\sch}{\mathfrak{Sch}_{/\kappa}}
\newcommand{\sets}{\mathfrak{Sets}}
\DeclareMathOperator{\coker}{{coker}}
\DeclareMathOperator{\img}{{im}}
\DeclareMathOperator{\Hom}{Hom}
\DeclareMathOperator{\Ext}{Ext}
\DeclareMathOperator{\Tor}{Tor}
\newcommand{\Hilb}{\operatorname{Hilb}}
\newcommand{\inhom}{{\mathcal H}{\it om}}
\newcommand{\inext}{{\mathcal E}{\it xt}}
\newcommand*{\ponto}{\makebox[1.5ex]{\textbf{$\cdot$}}}
\newtheorem{coro}{Corollary}[section]
\newtheorem{defi}[coro]{Definition}
\newtheorem{lema}[coro]{Lemma}
\newtheorem{prop}[coro]{Proposition}
\newtheorem{rem}[coro]{Remark}
\newtheorem{teo}[coro]{Theorem}
\newtheorem{mcor}{Corollary}
\newenvironment{thmintro}[1]
{\innerthmintro}
{\endinnerthmintro}
\author[M. Corrêa]{Maur\'icio Corr\^ea}
\address[M. Corrêa]{
Universit\`a degli Studi di Bari, 
Via E. Orabona 4, I-70125, Bari, Italy
\newline
\indent
UFMG\\
Avenida Ant\^onio Carlos, 6627\\
30161-970 Belo Horizonte\\ Brazil}
\email[M. Corrêa]{mauriciomatufmg@gmail.com,mauriciojr@ufmg.br, mauricio.barros@uniba.it}
\author[M. Jardim]{Marcos Jardim}
\address[M. Jardim]{IMECC - UNICAMP \\ Departamento de Matem\'atica \\
Rua S\'ergio Buarque de Holanda, 651\\ 13083-970 Campinas-SP, Brazil}
\email[M. Jardim]{jardim@unicamp.br}
\author[A. Muniz]{Alan Muniz}
\address[A. Muniz]{ Institut de Mathématiques de Bourgogne, UMR 5584 CNRS, Université de Bourgogne et Franche-Comté, 9 Avenue Alain Savary, BP 47870, 21078 Dijon
Cedex, France}
\email[A. Muniz]{alan.muniz@u-bourgogne.fr}
\subjclass[2020]{Primary 58A17, 14D20, 14J60; Secondary 14D22, 14F06, 13D02}
\keywords{Distributions, Hilbert scheme, Singular scheme, Syzygy}
\date{30 January 2022}
\title{Moduli of Distributions via Singular Schemes}
\thanks{MC was partially supported by CNPq grant numbers 202374/2018-1, 302075/2015-1, 400821/2016-8. 
MJ is partially supported by the CNPQ grant number 302889/2018-3 and the FAPESP Thematic Project 2018/21391-1. 
AM is supported by a CAPES post-doctoral grant under the CAPES-Cofecub projet number 88887.191919/2018-00.
We also acknowledge the financial support from Coordenação de Aperfeiçoamento de Pessoal de Nível
Superior (CAPES) under the Finance Code 001.}
\begin{document}

\begin{abstract}
Let $X$ be a smooth projective variety. We show that the map that sends a codimension one distribution on $X$ to its singular scheme is a morphism from the moduli space of distributions into a Hilbert scheme. We describe its fibers and, when $X = \pn$, compute them via syzygies. As an application, we describe the moduli spaces of degree 1 distributions on $\p3$. We also give the minimal graded free resolution for the ideal of the singular scheme of a generic distribution on $\p3$.
\end{abstract}

\maketitle
\tableofcontents

\section{Introduction}
The theory of distributions has its origins in the studies of nineteenth-century mathematicians such as Clebsch, Darboux, Frobenius, Grassmann and Jacobi. They were motivated by the fundamental work of Pfaff, who proposed a geometric approach to the study of differential equations, see \cite{For,Hawkins}. When a distribution is integrable, in Frobenius's sense, it defines a foliation, whose study also goes back to the qualitative theory of differential equations initiated by Poincaré, Darboux and Painlevé. Since then, these theories have developed into many distinct interesting active areas of research.

In this paper we are interested in the study of families of distributions on algebraic varieties. A distribution $\sF$ on a scheme $X$, over a field $\kappa$, may be defined by a subsheaf $T_\sF\hookrightarrow TX$, called the tangent sheaf of $\sF$. In the integrable case, this has been initiated by Jouanolou \cite{Jou} with the description of the spaces of codimension one foliations on complex projective spaces, of degrees $0$ and $1$, see Section \ref{sec:fam} for the definition of degree. Later Cerveau and Lins Neto \cite{CLN} described the case of degree $2$. Recently, a partial classification in degree $3$ was obtained in \cite{CLPdeg3}. These works rely on a naive definition of moduli.

A systematic study of flat families of (not necessarily integrable) distributions was recently initiated in \cite{CCJ}, see also \cite{QUAL}. The authors of \cite{CCJ} showed that there exists a quasi-projective variety $\dd^{P}(X)$ that parametrizes isomorphism classes of distributions, on a complex projective manifold $X$, whose tangent sheaves have a fixed Hilbert polynomial $P$; a more detailed description of $\dd^{P}(X)$ is given in Section \ref{sec:fam} below, where we also address some technical issues. In addition, they successfully described certain moduli spaces of codimension one distributions, see \cite[Section 11]{CCJ}, using a forgetful map to the moduli space of sheaves. 

Each distribution $\sF$ has a (possibly empty) singular scheme $\sing(\sF)$. If $\sF$ has codimension one, the Hilbert polynomial of $\sing(\sF)$ is determined by that of $T_\sF$. This defines a set-theoretic map from $\dd^{P}(X)$, for $P$ of degree $\dim X -1$, to a Hilbert scheme. One goal of this work is to establish that this map is a morphism of schemes. More precisely, we prove the following:

\begin{thmintro}{A}\label{moduli-map}
Let $X$ be a smooth projective scheme over an algebraically closed ground field $\kappa$ and fix $H \in \Pic(X)$ ample. Let $\mathcal{D}^P(X)$ be the scheme parameterizing codimension one distributions (up to isomorphism) on $X$ with tangent sheaves having Hilbert polynomial equal to $P$. Then there exists a polynomial $Q$ depending on $P$ and a morphism 
$$
\Sigma\colon \mathcal{D}^P(X)\longrightarrow {\rm Hilb}^Q(X),
$$ 
which assigns to each distribution its singular scheme. Moreover, if ${\rm Pic}(X)\simeq \mathbb{Z}$ then $Q(t) =P_{\mathcal{O}_X}(t)-P_{TX}(t-c)+P(t-c)$, with $c \in \mathbb{Z}$ depending on $P$, and:

\begin{itemize}
    \item For any $\sF\in\mathcal{D}^P(X)$, the fiber of the morphism $\Sigma$, parameterizing distributions with singular scheme equal to $Z\in\Hilb^{Q}(X)$ is a Zariski open subset of $\mathbb{P}\left(H^0(\Omega^1_{X}(c)\otimes\mathcal{I}_Z)\right)$;
    \item When $X=\pn$ and $Z$ is not contained in a hypersurface of degree less than or equal to $c-2$, then the elements of  $H^0(\Omega^1_{\pn}(c)\otimes\mathcal{I}_Z)$ can be obtained from the linear first syzygies of the homogeneous ideal associated with $Z$.
\end{itemize}
\end{thmintro}
This result will be proved in Theorem \ref{sing:map}, Proposition \ref{fibra}, Proposition \ref{psyz} and Remark \ref{algo} below. The properties of the morphism $\Sigma$ are related to two important problems in the theory of holomorphic distributions, $\kappa = \mathbb{C}$:
\begin{enumerate}
\item Determination of the distributions by their singular schemes; 
\item Construction of distributions with prescribed singular schemes.
\end{enumerate}
The first is equivalent to the injectivity of $\Sigma$, the second to its surjectivity.

Problem (i) was considered by Gomez-Mont and Kempf \cite{GK} for foliations by curves in projective spaces, with reduced, zero dimensional singular schemes. Campillo and Olivares showed in \cite{CO2} that the hypothesis that the singular scheme is reduced may be removed. Araujo and Corr\^ea \cite{CoA} provided sufficient conditions for distributions of arbitrary dimension to be uniquely determined by their singular schemes. 

On the other hand, Problem (ii) was first considered by Campillo and Olivares in \cite{CO} in order to understand under which conditions a subscheme of $\p2$ can be the singular subscheme of a foliation; they provided a Cayley--Bacharach type theorem for codimension one foliations on $\p2$. The same problem for the case of codimension one foliations in $\p3$ also has been proposed by Cerveau in \cite{Ce}.

As a first application, we consider generic codimension one distributions on $\pn$, that is, those distributions with 0-dimensional singular schemes. It was proved in \cite[Theorem 1.4]{CoA} that if $d \geq n$, then a generic distribution of degree $d$ on $\pn$ with zero dimensional singularities is determined by its singular scheme; however, a careful reader will notice that their argument works in greater generality; in fact, it holds for every $d$ and $n$, except for $d=1$ and $n=2,3$. Degree 1 distributions (foliations) on $\mathbb{P}^2$ are not determined by their singular schemes, as one can easily define non isomorphic foliations with three nondegenerate singularities in general position, see \cite[p. 882]{CO:plane}. 
For $n=3$ we will use our syzygetic approach to describe distributions with a given 0-dimensional singular scheme.

\begin{thmintro}{B}\label{mthm2}
Let $\sF $ be a degree $d\geq 1$ distribution on $\mathbb{P}^3$. If $Z$, the singular scheme of $\sF$, is zero-dimensional then the homogeneous ideal $I(Z)$ has minimal graded free resolution given by
$$
0 \longrightarrow S(-3d-2) \longrightarrow S(-2d-1)^4\oplus S(-d-2) \longrightarrow S(-d-1)^4 \oplus S(-2d) \longrightarrow I(Z) \longrightarrow 0 
$$
where $S = \kappa[x_0, \dots, x_3]$. 
Moreover, the space of distributions singular at $Z$ has dimension $4$ if $d=1$ and, if $d\geq 2$ then $\sF $ is the unique degree $d$ distribution singular at $Z$. 

\end{thmintro}

In particular, we obtain the following corollary.

\begin{mcor}\label{determ-generic}
Let $\sF$ be a degree $d$ distribution $\mathbb{P}^n$ with isolated singularities. Then  $\sF $ is determined by its singular scheme if $(d,n) \notin \{(1,2),(1,3)\}$.
\end{mcor}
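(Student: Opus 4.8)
The plan is to read the Corollary off the two Main Theorems by recasting ``determined by its singular scheme'' as a statement about the fibers of $\Sigma$. Since $\sF$ has isolated singularities, $Z:=\sing(\sF)$ is zero-dimensional and gives a point of $\Hilb^Q(\pn)$. By Main Theorem~\ref{moduli-map} the fiber $\Sigma^{-1}([Z])$ is a nonempty Zariski-open subset of $\mathbb{P}\!\left(H^0(\Omega^1_{\pn}(d+2)\otimes\iz)\right)$, with $c=d+2$ in the present case $X=\pn$. Because $\mathcal{D}^P$ parametrizes isomorphism classes and a nonempty open subset of an irreducible projective space is a single point only when that space has dimension $0$, the distribution $\sF$ is determined by $Z$ among degree $d$ distributions exactly when
\[
h^0\!\left(\Omega^1_{\pn}(d+2)\otimes\iz\right)=1 .
\]
Thus the whole statement reduces to this one cohomological equality, to be verified for every $(d,n)\notin\{(1,2),(1,3)\}$.

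For $n=3$ I would invoke Main Theorem~\ref{mthm2} directly: it asserts that for $d\geq 2$ the distribution $\sF$ is the unique degree $d$ distribution singular at $Z$, i.e. $h^0(\Omega^1_{\p3}(d+2)\otimes\iz)=1$. The same theorem shows that for $d=1$ this space is instead $4$-dimensional, which is precisely why $(1,3)$ must be excluded; so the case $n=3$ is settled once the reduction above is in place.

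For the remaining values — $n\geq 4$ with any $d\geq 1$, and $n=2$ with $d\geq 2$ — the plan is to use the syzygetic half of Main Theorem~\ref{moduli-map}. First I would check that $Z$ is not contained in a hypersurface of degree $\leq d$: the coefficients $a_0,\dots,a_n$ of the defining $1$-form $\omega=\sum_i a_i\,dx_i$ of $\sF$ are forms of degree $d+1$ with no common factor (as $\sF$ is saturated) that cut out $Z$ and generate its homogeneous ideal up to saturation, so the least degree of a form vanishing on $Z$ is $d+1>d$. Granting this, Main Theorem~\ref{moduli-map} identifies $H^0(\Omega^1_{\pn}(d+2)\otimes\iz)$ with the space of linear first syzygies of $I(Z)$, and the required equality becomes the assertion that $a_0,\dots,a_n$ carry, up to scalar, a single linear first syzygy, namely the Euler relation $\sum_i x_i a_i=0$ that produces $\sF$ itself.

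The crux is this last count, and it is where I expect the real work to lie: controlling the linear strand of the free resolution of $I(Z)=(a_0,\dots,a_n)$ uniformly in $n$. I would approach it through the Buchsbaum--Rim/Koszul complex of the map $\mathcal{O}_{\pn}(-d-1)^{\oplus(n+1)}\to\mathcal{O}_{\pn}$ whose degeneracy locus is the zero-dimensional $Z$, and extract from its graded Betti numbers that the space of syzygies in degree $d+2$ on the degree-$(d+1)$ generators is one-dimensional, spanned by Euler; for $n=3$ this is exactly the single summand $S(-d-2)$ in the resolution of Main Theorem~\ref{mthm2}. This numerical input is precisely what degenerates in the two forbidden cases, where the $a_i$ are quadrics ($d=1$) and the singular scheme is so small that extra linear syzygies appear — the classical pencil through three general points for $\p2$, and the $4$-dimensional family of Main Theorem~\ref{mthm2} for $\p3$ — which forces the exclusion of $(1,2)$ and $(1,3)$. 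Everywhere else the count returns $h^0=1$ and $\sF$ is recovered from its singular scheme; alternatively, for these non-exceptional $(d,n)$ one may appeal to the generalized argument of \cite{CoA}, which applies in exactly this range.
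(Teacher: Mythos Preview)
Your plan matches the paper's route: the corollary is obtained by combining Main Theorem~\ref{mthm2} for $n=3$ with the observation (made just before Theorem~\ref{resolution} and in the introduction) that the argument of \cite[Theorem~1.4]{CoA}, although stated for $d\ge n$, actually goes through for all $(d,n)\notin\{(1,2),(1,3)\}$. Your reduction of ``determined by $Z$'' to $h^0(\Omega^1_{\pn}(d+2)\otimes\iz)=1$ via the fiber description is exactly Proposition~\ref{fibra}.

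One step needs repair. Your justification that $Z$ lies on no hypersurface of degree $\le d$ is a non-sequitur: knowing that the $a_i$ have degree $d+1$, share no common factor, and generate $I(Z)$ \emph{up to saturation} does not preclude the saturation from containing forms of degree $\le d$. The paper proves this separately as Lemma~\ref{lem:generic}, via the Koszul resolution of $\iz$ induced by $\omega$ and a cohomology chase through Bott vanishing; you should invoke that lemma rather than the heuristic you gave. With that fix, your outline is sound. Your proposed direct syzygy count for general $n$ via the Buchsbaum--Rim complex is plausible but is not carried out in the paper, which simply leans on \cite{CoA} outside $n=3$.
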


Theorem \ref{moduli-map} can also be used to give a full description of the irreducible components of the moduli space of isomorphism classes of codimension one distributions of degree 1 on $\p3$. A classification of the possible Chern classes of the tangent sheaf $T_\sF$ of such a distribution was given in \cite{CCJ}. In fact, it was shown that $\sing(\sF)$ is never planar, and $T_\sF$ either splits as a sum of line bundles or is $\mu$-stable; the relevant information is summarized in Table \ref{table deg 1 v2}.

\begin{table}[h]
\begin{tabular}{|c | c | c | c | c | } \hline
 $c_2(T_\sF)$ & $c_3(T_\sF)$ & $T_\sF$ & $\sing(\sF)$ \\ \hline\hline
3 & 5 & stable & 5 points \\ \hline
2 & 2 & stable & line and 2 points \\ \hline
1 & 1 & stable & conic and 1 point \\ \hline
0 & 0 & split & twisted cubic \\ \hline
\end{tabular}
\medskip
\caption{Classification of codimension one distributions of degree 1 on $\p3$, according to \cite[Section 8]{CCJ}; the last column describes the singular scheme of a generic distribution whose tangent sheaf has the corresponding Chern classes.}
\label{table deg 1 v2}
\end{table}

Let $\dd(a,b)$ denote the moduli space of degree 1 distributions $\ff$ on $\p3$ such that $(c_2(T_\ff),c_3(T_\ff))=(a,b)$. We prove:

\begin{thmintro}{C}
\label{mthm3} 
The moduli space $\dd(a,b)$ is an irreducible smooth quasi-projective variety whenever it is non-empty. In addition:
\begin{itemize}
\item the image of the morphism $\Sigma\colon\dd(0,0)\rightarrow {\rm Hilb}^{3t+1}(\p3)$ is an open subset of the aCM component of ${\rm Hilb}^{3t+1}(\p3)$, and its fibres are open subsets of $\p1$; $\dim\dd(0,0)=13$.
\item the morphism $\Sigma\colon\dd(2,2)\rightarrow {\rm Hilb}^{t+3}(\p3)$ is dominant, and its fibres are open subsets of $\p5$; $\dim\dd(2,2)=15$.
\item the image of the morphism $\Sigma\colon\dd(1,1)\rightarrow {\rm Hilb}^{2t+2}(\p3)$ is an open subset of the non pure component of ${\rm Hilb}^{2t+2}(\p3)$, and its fibres are open subsets of $\p3$; $\dim\dd(1,1)=14$.
\end{itemize}
\end{thmintro}

Further detailed information about the moduli spaces, the image of the morphism $\Sigma$ as well as its fibres can be found in the Theorems \ref{cubic}, \ref{line+2pt}, \ref{conic+pt} below. The proof of smoothness uses technical lemmata established in the Appendix \ref{Apdx} and which might be of independent interest.

We point out that the fact that $\dd(1,1)$ is an irreducible, smooth quasi-projective variety of dimension $14$ was already established in \cite[Proposition 11.2]{CCJ} by analyzing the corresponding forgetful morphism. Let us also mention, for the sake of completeness, that $\dd(3,5)$, being the moduli space of generic distributions of degree 1, is an open subset of $\p{19}$.

Finally, we need to mention that the computations in this article were done, in part, with the help of the computer algebra system Macaulay2 \cite{M2}. Besides speeding up the work, it was crucial to building up intuition to achieve some of our results.

\addtocontents{toc}{\protect\setcounter{tocdepth}{0}}
\subsection*{Acknowledgments}
We thank Renato Vidal Martins for his participation in the initial stages of this project and many enlightening discussions. We wish to thank the referee for many important suggestions.

\addtocontents{toc}{\protect\setcounter{tocdepth}{2}}

\section{Families of Distributions} \label{sec:fam}

Let $\kappa$ be an algebraically closed field and let $X$ be a smooth projective variety over $\kappa$. A \emph{codimension $r$ distribution} $\sF$ on $X$ is given by an exact sequence
\begin{equation}\label{eq:Dist}
\mathscr{F}:\ 0 \longrightarrow T_\sF \stackrel{\phi}{ \longrightarrow} TX \stackrel{\pi}{ \longrightarrow} N_{\sF} \longrightarrow 0,
\end{equation}
where $T_\sF$ is a coherent sheaf of rank $m:=\dim(X)-r$, and $N_{\sF}$ is a torsion free sheaf. The sheaves $T_\sF$ and $N_{\sF}$ are called the \emph{tangent} and the \emph{normal} sheaves of $\mathscr{F}$, respectively. Note that $T_\sF$ must be reflexive \cite[Proposition 1.1]{HL}.

Two distributions $\mathscr{F}$ and ${\mathscr{F}}'$ are isomorphic if there exists an isomorphism $\beta\colon T_{\mathscr{F}}\rightarrow T_{\mathscr{F}'}$ such that $\phi'\circ\beta=\phi$. By taking exterior power $\bigwedge^{m}\phi \colon \det(T_{\mathscr{F}})\rightarrow\bigwedge^{m}TX$, every codimension $r$ distribution induces a section of $H^0(X,\bigwedge^{m}TX \otimes \det(T_\sF)^{\vee})$. If $\mathscr{F}$ and ${\mathscr{F}}'$ are isomorphic distributions, then $\bigwedge^{m}\phi'=\lambda \cdot \bigwedge^{m}\phi$, for some constant $\lambda\in\kappa^*$. Therefore, every isomorphism class of codimension $r$ distribution on $X$ induces an element in the projective space
$$
\mathbb{P}\!\left(H^0\!\left(\bigwedge^{m}TX \otimes \det(T_\sF)^{\vee}\right)\right) = 
\mathbb{P}\!\left(H^0\!\left(\Omega^r_X \otimes\det(TX)\otimes \det(T_\sF)^{\vee}\right)\right). 
$$

The \emph{singular scheme} of $\mathscr{F}$ is defined as follows. Consider the the dual morphism $\phi^\vee\colon \Omega^1_X\rightarrow T_\sF^\vee$. Taking the maximal exterior power and a twist we obtain 
$$\Omega^{m}_X\otimes \det(T_\sF) \longrightarrow \mathcal{O}_X;$$ 
the image of such a morphism is the ideal sheaf $\iz$ of a subscheme $\sing(\sF):=Z\subset X$, called the singular scheme of $\mathscr{F}$.

Next, we examine the notion of a family of distributions parameterized by a scheme $S$ of finite type over $\kappa$. In fact, the authors of \cite{QUAL} and \cite{CCJ} have proposed two slightly different notions for a \emph{family of distributions parameterized by $S$}, namely:

\begin{defi}\label{def:quall}
According to \cite{QUAL}, a family of distributions parameterized by $S$ is given by an $S$-scheme $\mathcal{X}\rightarrow S$ and a short exact sequence
\[
0 \longrightarrow \mathbf{F} \longrightarrow T\mathcal{X}_S \longrightarrow \mathbf{N} \longrightarrow 0,
\]
where $T\mathcal{X}_S$ stands for the relative tangent sheaf. Moreover, the family is called flat over $S$ if so is $\mathbf{N}$.
\end{defi}

\begin{defi}\label{def:CCJ} 
Let $X$ be a smooth projective variety and let $TX_S$ denote the relative tangent sheaf of $X\times_\kappa S \rightarrow S$. According to \cite{CCJ}, a family of distributions on $X$ parameterized by $S$ is given by a subsheaf $\boldsymbol{\phi} \colon \mathbf{F} \hookrightarrow TX_S$ such that for each $s\in S$ the restriction $\boldsymbol{\phi}_s\colon \mathbf{F}|_s \rightarrow TX_s$ is injective and $\operatorname{coker}(\phi_s)$ is torsion free.
\end{defi}

Throughout this work we will stick with the latter definition but, in fact, we can reconcile both proposals. In order to achieve that we must make some technical considerations. Recall the following consequence \cite[Exposé IV, p.81]{SGA1} of the Local Flatness Criterion, see also \cite[p.35]{HL}.

\begin{lema}
\label{cor:injfib}
Let $(A,\mathfrak{m}) \rightarrow (B,\mathfrak{n})$ be a morphism of Noetherian local rings $(\mathfrak{m}B \subset \mathfrak{n})$ and let $u\colon M' \rightarrow M$ be a morphism of finitely generated $B$-modules. Suppose that $M$ is flat over $A$. Then the following are equivalent:
\begin{enumerate}
	\item $u$ is injective and $\operatorname{coker}(u)$ is flat over $A$;
	\item $u\otimes 1 \colon M'\otimes_A A/\mathfrak{m} \rightarrow M\otimes_A A/\mathfrak{m}$ is injective.
\end{enumerate}
\end{lema}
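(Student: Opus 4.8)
The plan is to recast both conditions in terms of the vanishing of $\Tor_1^A(A/\mathfrak{m},-)$ and to invoke the local criterion for flatness in the form: a finitely generated module $N$ over the Noetherian local $A$-algebra $B$ is $A$-flat if and only if $\Tor_1^A(A/\mathfrak{m},N)=0$. Throughout I would factor $u$ through its image, writing $u=\iota\circ\pi$ with $\pi\colon M'\twoheadrightarrow I:=\img(u)$ and $\iota\colon I\hookrightarrow M$, and keep in play the two short exact sequences $0\to\ker(u)\to M'\to I\to 0$ and $0\to I\to M\to Q\to 0$, where $Q:=\coker(u)$.

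The implication (1)$\Rightarrow$(2) is immediate: tensoring $0\to M'\to M\to Q\to 0$ with $A/\mathfrak{m}$ yields the $\Tor$ exact sequence, and flatness of $Q$ kills $\Tor_1^A(A/\mathfrak{m},Q)$, leaving $u\otimes 1$ injective.

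For (2)$\Rightarrow$(1), which is the substance, I would argue in three moves. First, right exactness makes $\pi\otimes 1$ surjective; since $u\otimes 1=(\iota\otimes 1)\circ(\pi\otimes 1)$ is injective by hypothesis, $\pi\otimes 1$ is also injective, hence an isomorphism, and therefore $\iota\otimes 1$ is injective as well. Second, I would feed this into the $\Tor$ sequence of $0\to I\to M\to Q\to 0$: using $\Tor_1^A(A/\mathfrak{m},M)=0$ (as $M$ is $A$-flat) together with the injectivity of $\iota\otimes 1$, the sequence forces $\Tor_1^A(A/\mathfrak{m},Q)=0$, so the local criterion gives flatness of $Q=\coker(u)$. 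Third, from $\Tor_1^A(A/\mathfrak{m},M)=0=\Tor_2^A(A/\mathfrak{m},Q)$ (the latter now available since $Q$ is flat) the same $\Tor$ sequence yields $\Tor_1^A(A/\mathfrak{m},I)=0$, i.e. $I$ is $A$-flat; tensoring $0\to\ker(u)\to M'\to I\to 0$ with $A/\mathfrak{m}$ then stays left exact, and since $\pi\otimes 1$ is the isomorphism found above, we conclude $\ker(u)\otimes_A A/\mathfrak{m}=\ker(u)/\mathfrak{m}\ker(u)=0$.

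The main obstacle, and the step needing the most care, is this final passage from $\ker(u)=\mathfrak{m}\ker(u)$ to $\ker(u)=0$. Here I would use that $\ker(u)$ is a finitely generated $B$-module, being a submodule of the finitely generated module $M'$ over the Noetherian ring $B$, and that the hypothesis $\mathfrak{m}B\subseteq\mathfrak{n}$ forces $\mathfrak{n}\ker(u)=\ker(u)$; Nakayama's lemma over the local ring $(B,\mathfrak{n})$ then gives $\ker(u)=0$, so $u$ is injective. This is precisely the point at which the assumption $\mathfrak{m}B\subseteq\mathfrak{n}$ on the local homomorphism is essential.
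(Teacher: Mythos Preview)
Your argument is correct. You invoke the local criterion of flatness in exactly the right form, and the three-step passage for (2)$\Rightarrow$(1) is clean: first isolating the injectivity of $\iota\otimes 1$, then deducing flatness of $Q$, then of $I$, and finally killing $\ker(u)$ via Nakayama over $B$. The care you take with the last step is warranted, and your observation that $\mathfrak{m}\ker(u)=\ker(u)$ together with $\mathfrak{m}B\subset\mathfrak{n}$ forces $\mathfrak{n}\ker(u)=\ker(u)$ is exactly what makes Nakayama applicable.

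As for comparison with the paper: the paper does not give its own proof of this lemma. It is stated there as a known consequence of the local flatness criterion and referenced to SGA1, Expos\'e~IV, and to Huybrechts--Lehn. Your write-up therefore supplies what the paper only cites; the route you take is essentially the standard one underlying those references, so there is no divergence in method to discuss.
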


Suppose that $S$ and $X$ are noetherian and $X$ is flat over $S$. As any coherent sheaf $\mathcal{N}$ on $X$ is the quotient of a locally free sheaf, 
\[
0 \longrightarrow \ker(\phi) \longrightarrow E \stackrel{\phi}{\longrightarrow} \mathcal{N} \longrightarrow 0.
\]
Then Lemma \ref{cor:injfib} tells us that $\mathcal{N}$ is flat over $S$ if and only if the sequence above base changes correctly, i.e., for every $s\in S$ we have 
\[
\ker(\phi)|_s \simeq \ker(\phi_s).
\]
Note that we have only used that $X$ is flat over $S$ to imply that $E$ being locally free (hence flat over $X$) is also flat over $S$.

\begin{prop}
 A family of distributions according to Definition \ref{def:CCJ} is equivalent to a flat family in Definition \ref{def:quall} such that $\mathcal{X} = X\times_\kappa S$ and $\mathbf{N}_s$ is torsion free for every $s\in S$.
\end{prop}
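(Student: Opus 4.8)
The plan is to use the Local Flatness Criterion of Lemma \ref{cor:injfib}, in the globalized form recorded just before the statement, to translate the fibrewise conditions of Definition \ref{def:CCJ} into the single flatness condition of Definition \ref{def:quall}. The starting observation is that for $\mathcal{X}=X\times_\kappa S$ the relative tangent sheaf $T\mathcal{X}_S$ is canonically the pullback $TX_S$ of $TX$, so both definitions involve exactly the same ambient locally free sheaf; moreover, since $X$ is smooth over the field $\kappa$, the projection $X\times_\kappa S\to S$ is flat and $TX_S$, being locally free, is flat over $S$. Thus the hypotheses needed to run the flatness criterion are in force.

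First I would set up the fibrewise dictionary. Given a short exact sequence $0\to\mathbf{F}\xrightarrow{\phi}TX_S\xrightarrow{\pi}\mathbf{N}\to 0$ with $\mathbf{N}=\coker(\phi)$, restricting to a point $s\in S$ (that is, applying $-\otimes_{\mathcal{O}_S}\kappa(s)$) yields the right exact sequence $\mathbf{F}|_s\xrightarrow{\phi_s}TX_s\xrightarrow{\pi_s}\mathbf{N}|_s\to 0$. Right exactness gives at once $\operatorname{im}(\phi_s)=\ker(\pi_s)$ and hence $\coker(\phi_s)\simeq\mathbf{N}|_s$. Consequently the natural surjection $\mathbf{F}|_s=\ker(\pi)|_s\twoheadrightarrow\ker(\pi_s)$ is an isomorphism precisely when $\phi_s$ is injective.

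The core step is then to apply the flatness criterion to the surjection $\pi\colon TX_S\to\mathbf{N}$, whose kernel is $\mathbf{F}$: since $TX_S$ is $S$-flat, $\mathbf{N}$ is flat over $S$ if and only if $\ker(\pi)|_s\simeq\ker(\pi_s)$ for every $s$, which by the previous paragraph holds if and only if $\phi_s$ is injective for every $s$. This is the heart of the equivalence. For the direction $(\Rightarrow)$, a family as in Definition \ref{def:CCJ} is a subsheaf $\phi\colon\mathbf{F}\hookrightarrow TX_S$ with $\phi_s$ injective and $\coker(\phi_s)$ torsion free for all $s$; setting $\mathbf{N}:=\coker(\phi)$ produces a short exact sequence, and fibrewise injectivity forces $\mathbf{N}$ to be $S$-flat, while $\coker(\phi_s)\simeq\mathbf{N}|_s$ shows the fibres of $\mathbf{N}$ are torsion free, so we obtain a flat family in the sense of Definition \ref{def:quall}. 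Conversely, a flat family with $\mathcal{X}=X\times_\kappa S$ gives an injective $\phi$ (so $\mathbf{F}$ is a subsheaf) with $\mathbf{N}$ flat, whence $\phi_s$ is injective for all $s$, and $\coker(\phi_s)\simeq\mathbf{N}|_s$ is torsion free; this is exactly the data of Definition \ref{def:CCJ}.

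The main obstacle I anticipate is bookkeeping rather than conceptual: one must be careful that restriction to the fibre $s$ means $-\otimes_{\mathcal{O}_S}\kappa(s)$, and that the only exactness which can fail is injectivity at the leftmost term, so that the comparison map $\mathbf{F}|_s\to\ker(\pi_s)$ is always a surjection and measures exactly the failure of $\phi_s$ to be injective, equivalently the nonvanishing of $\operatorname{Tor}_{1}^{\mathcal{O}_S}(\mathbf{N},\kappa(s))$. A secondary point to state cleanly is the matching of the two auxiliary hypotheses: the torsion-freeness of $\coker(\phi_s)$ in Definition \ref{def:CCJ} corresponds, via the isomorphism $\coker(\phi_s)\simeq\mathbf{N}|_s$, to the requirement that the normal sheaf of the Definition \ref{def:quall} family be fibrewise torsion free, i.e. that each member of the family genuinely be a distribution.
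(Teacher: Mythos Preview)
Your proof is correct and follows essentially the same approach as the paper: both arguments verify that $TX_S$ is flat over $S$ (via flatness of $X\times_\kappa S\to S$ and local freeness of $TX_S$) and then invoke Lemma~\ref{cor:injfib} to equate fibrewise injectivity of $\phi_s$ with $S$-flatness of $\coker(\phi)$. You are in fact more thorough than the paper's proof, which does not explicitly address the matching of the torsion-freeness condition on $\coker(\phi_s)$ with the requirement that the fibres of the Quallbrunn family be distributions; your observation that $\coker(\phi_s)\simeq\mathbf{N}|_s$ handles this cleanly.
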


\begin{proof}
Since $X$ is trivially flat over $\operatorname{Spec}(\kappa)$ we know that $X\times S$ is flat over $S$ since flatness has base change property. Also note that both are noetherian since they are of finite type over $\kappa$. Now $TX_S$ is a locally free sheaf on $X\times S$ hence flat over $X\times S$ and, by transitivity of flatness \cite[Proposition 9.1A]{HART:AG}, $TX_S$ is flat over $S$. Then we apply Lemma \ref{cor:injfib} to the morphism $\boldsymbol{\phi}\colon\mathbf{F} \rightarrow TX_S$ to show that $\coker(\boldsymbol{\phi})$ is flat over $S$ if and only if $\boldsymbol{\phi}_s\colon \mathbf{F}|_s \rightarrow TX_s$ is injective for every $s\in S$.
\end{proof}

Note that since $TX_S$ is flat over $S$, $\coker(\boldsymbol{\phi})$ being flat implies that $\mathbf{F}$ is flat over $S$ as well. 


\section{Moduli of Codimension One Distributions and Hilbert Schemes}\label{sec:moduli}
We now recall the definition of the moduli space of distributions, which is based on the notion of family of distributions explained in the previous section.
Let $X$ be a smooth projective variety and fix $H=\mathcal{O}_X(1)$ an ample line bundle (a polarization). We will write $\mathcal{O}_X(k)$, $k\in \mathbb{Z}$, for its tensor powers.

Let $\sch$ denote the category of schemes of finite type over $\kappa$, and $\sets$ be the category of sets. Fix a polynomial $P\in\mathbb{Q}[t]$, and consider the functor
\begin{gather*}
    \dist^P_X \colon \sch^{\rm op} \longrightarrow \sets , \\
   \dist^P_X(S) := \left\{(\mathbf{F},\boldsymbol{\phi}) \mid \text{\parbox[c]{.4\linewidth}{\raggedright \ $\mathbf{F} \stackrel{\boldsymbol{\phi}}\hookrightarrow T\mathcal{X}_S$  is a family of distributions with  $\chi(F|_s(t)) = P(t), \, \forall s\in S$ }} \ \right\}/\sim, 
\end{gather*}
where a family is considered as in Definition \ref{def:CCJ}, and we say that $(\mathbf{F},\boldsymbol{\phi})\sim(\mathbf{F}',\boldsymbol{\phi}')$ if there exists and isomorphism $\boldsymbol{\beta}\colon\mathbf{F}\rightarrow\mathbf{F}'$ such that $\boldsymbol{\phi}=\boldsymbol{\phi}'\circ\boldsymbol{\beta}$.

One can show that the functor $\dist^P_X$ is represented by a quasi-projective scheme $\mathcal{D}^P(X)$, that is, there exists an isomorphism of functors $\dist^P_X \stackrel{\sim}{\rightarrow} \Hom(\ponto,\mathcal{D}^P(X))$ \cite[Proposition 2.4]{CCJ}. Furthermore, the scheme $\mathcal{D}^P(X)$ is an open subset of the quot scheme ${\rm Quot}^{P_{TX}-P}(TX)$. This last observation implies that the Zariski tangent space of $\mathcal{D}^P(X)$ at the point $[F,\phi]$ is $\Hom(F,\coker\phi)$ and leads to the following smoothness criterion as an immediate consequence of \cite[Proposition 2.2.8]{HL}

\begin{prop} \label{smooth}
Let $[\ff]\in\mathcal{D}^P(X)$ be the isomorphism class of a distribution on $X$. If \linebreak $\Ext^1(T_\ff, N_\ff)=0$, then $\mathcal{D}^P(X)$ is smooth at $[\ff]$ and $\dim T_{[\ff]}\mathcal{D}^P(X)=\dim \Hom(T_\ff, N_\ff)$.
\end{prop}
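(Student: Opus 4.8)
The plan is to leverage the structural fact just recorded before the statement, namely that $\mathcal{D}^P(X)$ is a Zariski-open subscheme of the Quot scheme ${\rm Quot}^{P_{TX}-P}(TX)$. Smoothness at a point and the dimension of the Zariski tangent space are both local properties that transfer verbatim across an open immersion, so it suffices to prove the assertion for the Quot scheme at the point corresponding to the quotient $\pi\colon TX \onto N_\ff$ whose kernel is $T_\ff$. Note that the numerics match: since $P$ is the Hilbert polynomial of $T_\ff$, the quotient $N_\ff$ has Hilbert polynomial $P_{TX}-P$, so $[\ff]$ does indeed lie in ${\rm Quot}^{P_{TX}-P}(TX)$.

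Next I would invoke the standard deformation theory of the Quot scheme in the form cited, \cite[Proposition 2.2.8]{HL}: at a point $[\mathcal{E}\onto\mathcal{Q}]$ with kernel $\mathcal{K}$, the Zariski tangent space is canonically $\Hom(\mathcal{K},\mathcal{Q})$, while the obstructions to lifting infinitesimal deformations lie in $\Ext^1(\mathcal{K},\mathcal{Q})$. Specializing $\mathcal{E}=TX$, $\mathcal{K}=T_\ff$ and $\mathcal{Q}=N_\ff=\coker\phi$ recovers precisely the identification $T_{[\ff]}\mathcal{D}^P(X)\simeq\Hom(T_\ff,N_\ff)$ already quoted, and exhibits $\Ext^1(T_\ff,N_\ff)$ as the relevant obstruction space. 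The only point demanding care here is the bookkeeping that the kernel of the universal quotient on ${\rm Quot}^{P_{TX}-P}(TX)$ restricts at $[\ff]$ to $T_\ff$ and the universal quotient to $N_\ff$, so that the abstract $\Hom$ and $\Ext^1$ groups coincide with those in the statement.

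The conclusion is then the general principle that a scheme whose obstruction space vanishes at a point is smooth there, with local dimension equal to that of its tangent space. Concretely, the hypothesis $\Ext^1(T_\ff,N_\ff)=0$ forces every first-order deformation of the quotient to be unobstructed, whence ${\rm Quot}^{P_{TX}-P}(TX)$, and therefore the open subscheme $\mathcal{D}^P(X)$, is smooth at $[\ff]$; smoothness makes the local dimension equal to $\dim_\kappa T_{[\ff]}\mathcal{D}^P(X)=\dim_\kappa\Hom(T_\ff,N_\ff)$. Since the whole argument is a direct reading of \cite[Proposition 2.2.8]{HL} through the open immersion $\mathcal{D}^P(X)\into{\rm Quot}^{P_{TX}-P}(TX)$, there is no substantive obstacle beyond this identification of the deformation-theoretic groups.
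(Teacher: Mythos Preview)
Your proposal is correct and is precisely the argument the paper has in mind: the paper does not give a separate proof but declares the proposition an ``immediate consequence of \cite[Proposition 2.2.8]{HL}'' via the open immersion $\mathcal{D}^P(X)\hookrightarrow{\rm Quot}^{P_{TX}-P}(TX)$, which is exactly what you have unpacked.
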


Unfortunately, the vanishing condition $\Ext^1(T_\ff, N_\ff)=0$ is not satisfied by most of the codimension one distributions of degree 1, treated in Section \ref{sec:deg1} below. For these examples we compute $\dim\mathcal{D}^P(X)$ and $\dim \Hom(T_\ff, N_\ff)$ separately and show that they match.

We are finally in position to prove our first main result.

\begin{teo}\label{sing:map}
Let $X$ be a smooth projective scheme over an algebraically closed ground field $\kappa$ and fix $H \in \Pic(X)$ ample. Let $\mathcal{D}^P(X)$ be the scheme parameterizing codimension one distributions (up to isomorphism) on $X$ with tangent sheaves having Hilbert polynomial equal to $P$. Then there exists a polynomial $Q$ depending on $P$ and a morphism 
$$
\Sigma\colon \mathcal{D}^P(X)\longrightarrow {\rm Hilb}^Q(X),
$$ 
which assigns to each distribution its singular scheme. 
\end{teo}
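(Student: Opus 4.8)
The plan is to construct $\Sigma$ as a morphism of schemes by exhibiting it functorially: I must show that a family of distributions over a base $S$ produces a flat family of singular subschemes over $S$, so that the universal property of the Hilbert scheme yields a classifying morphism. First I would work over a single point to pin down the target polynomial $Q$. Given a distribution $\sF$ with $\det(T_\sF)\simeq\mathcal{O}_X(-c)$ (here the rank-one Picard hypothesis is essential, as it forces $\det(T_\sF)$ to be a specific power of $\mathcal{O}_X(1)$, hence $c$ is an invariant of $P$), the construction in Section \ref{sec:fam} gives the ideal sheaf $\iz$ as the image of the twisted map $\Omega^m_X\otimes\det(T_\sF)\to\mathcal{O}_X$. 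I would compute the Hilbert polynomial of $\iz$, equivalently of $\mathcal{O}_Z=\mathcal{O}_X/\iz$, by comparing with the structure sequence: $P_{\mathcal{O}_Z}=P_X-P_{\iz}$, and the defining sequence $0\to T_\sF\to TX\to N_\sF\to 0$ twisted appropriately lets me express everything in terms of $P$, $P_{TX}$, and the shift $c$. This yields the stated formula $Q(t)=P_X(t)-P_{TX}(t+c)+P(t+c)$.

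The main work is the relative version. Given a family $(\mathbf{F},\boldsymbol{\phi})$ over $S$, I would form the relative dual morphism $\boldsymbol{\phi}^\vee\colon\Omega^1_{X_S}\to\mathbf{F}^\vee$, take the top exterior power and twist by $\det(\mathbf{F})$ to obtain a morphism $\Omega^m_{X_S}\otimes\det(\mathbf{F})\to\mathcal{O}_{X_S}$, whose image is a relative ideal sheaf $\mathcal{I}_{\mathbf{Z}}$ cutting out a closed subscheme $\mathbf{Z}\subset X\times S$. The key step is flatness of $\mathbf{Z}$ over $S$: I would verify that the Hilbert polynomial of the fibers $\mathbf{Z}_s$ is constant, equal to $Q$, for all $s\in S$. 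For this I want to show the formation of the singular scheme commutes with base change, i.e. $(\mathcal{I}_{\mathbf{Z}})|_s=\mathcal{I}_{\sing(\sF_s)}$. Since $\mathbf{F}$ and $N_\sF=\coker(\boldsymbol{\phi})$ are flat over $S$ (as established in the Remark following Definition \ref{def:CCJ}), and since forming the top exterior power of the relative cotangent map is compatible with restriction to fibers, the image sheaf should base-change correctly; a Hilbert-polynomial-constancy argument, or the standard criterion that a coherent sheaf on $X_S$ with fiberwise-constant Hilbert polynomial is flat, then gives flatness of $\mathcal{O}_{\mathbf{Z}}$ over $S$.

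With flatness in hand, the family $\mathbf{Z}\hookrightarrow X\times S$ is a flat family of closed subschemes with constant Hilbert polynomial $Q$, so by the universal property of $\Hilb^Q(X)$ it is classified by a unique morphism $S\to\Hilb^Q(X)$. I would then check that this assignment is natural in $S$ (functorial under pullback along $S'\to S$) and that it is compatible with the isomorphism equivalence $(\mathbf{F},\boldsymbol{\phi})\sim(\mathbf{F}',\boldsymbol{\phi}')$ — an isomorphism $\boldsymbol{\beta}$ scales the top exterior power by a unit (as computed in Section \ref{sec:fam} via $\det(\boldsymbol{\beta})$), hence leaves the image ideal sheaf, and therefore $\mathbf{Z}$, unchanged. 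By Yoneda, applied to the representing scheme $\mathcal{D}^P(X)$ of $\dist^P_X$, this natural transformation of functors is induced by the desired morphism $\Sigma\colon\mathcal{D}^P(X)\to\Hilb^Q(X)$.

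I expect the main obstacle to be the base-change compatibility of the image sheaf $\mathcal{I}_{\mathbf{Z}}$: taking the image of a morphism of sheaves does not in general commute with restriction to a fiber, so the heart of the argument is showing that in this situation it does. The cleanest route is probably to avoid arguing about the image directly and instead prove flatness of $\mathcal{O}_{\mathbf{Z}}$ by the constant-Hilbert-polynomial criterion, using the flatness of $N_\sF$ over $S$ together with the fiberwise computation of $Q$; once flatness is secured, the identification $\mathbf{Z}_s=\sing(\sF_s)$ follows because both are cut out by the same fiberwise-surjective-onto-its-image construction.
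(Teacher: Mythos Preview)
Your overall strategy---construct a natural transformation of functors and invoke Yoneda---is the right one, and your identification of the formula for $Q$ is fine. The gap lies in how you propose to obtain flatness of $\mathbf{Z}$ over $S$. The constant-Hilbert-polynomial criterion you invoke requires the base $S$ to be \emph{reduced}; since $\mathcal{D}^P(X)$ is an open subscheme of a Quot scheme and may well carry nilpotents, and since a natural transformation must be defined over \emph{all} test schemes $S$, this is not sufficient. There is also a circularity in your outline: you want to compute fiberwise Hilbert polynomials of $\mathbf{Z}$ to deduce flatness, but to identify $(\mathbf{Z})_s$ with $\sing(\sF_s)$ you need the formation of the image ideal $\mathcal{I}_{\mathbf{Z}}$ to commute with base change, which is exactly what flatness would give you and what you acknowledge you do not yet have.

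The paper's proof avoids both problems by taking a different route. Rather than building the relative singular scheme via the top exterior power of $\boldsymbol{\phi}^\vee$, it works directly with the relative normal sheaf $\mathbf{N}=\coker(\boldsymbol{\phi})$, which is \emph{already flat over $S$} by the very definition of a family of distributions. Fiberwise, $\mathbf{N}_s$ is rank one and torsion free on $X$, hence of the form $\mathcal{I}_{Z_s}\otimes\mathcal{O}_X(c)$ with $c$ constant (this is where $\Pic(X)\simeq\mathbb{Z}$ enters). Set $\mathcal{I}:=\mathbf{N}\otimes p_X^*\mathcal{O}_X(-c)$, still flat over $S$. One then shows that the double dual $\mathcal{I}^{\vee\vee}$ is a line bundle which, by a see-saw argument, is pulled back from $S$; after absorbing that twist, the canonical inclusion $\mathcal{I}\hookrightarrow\mathcal{I}^{\vee\vee}\simeq\mathcal{O}_{X\times S}$ realizes $\mathcal{I}$ as a flat family of ideal sheaves. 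Flatness is never re-proved: it is inherited from the normal sheaf from the outset, and the only work is to exhibit $\mathcal{I}$ as a genuine ideal of $\mathcal{O}_{X\times S}$.
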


\begin{proof}
We claim that there exists a natural transformation between the functors $\mathcal{D}ist^P_X$ and $\mathcal{H}ilb^Q_X$. In fact, for each $S\in \mathfrak{Sch}_\kappa$, we have that an element of $\mathcal{D}ist^P_X(S)$ defines an exact sequence
$$
0 \longrightarrow F\stackrel{\phi}{\longrightarrow}TX_S \longrightarrow N \longrightarrow 0
$$
of coherent sheaves on $X\times S$, such that for each $s\in S$, the sequence
\begin{equation}\label{seq:distrest}
    0 \longrightarrow F_s\stackrel{\phi_s}{\longrightarrow}TX \longrightarrow N_s \longrightarrow 0 
\end{equation}
is a distribution on $X$ and $F_s$ has Hilbert polynomial (with respect to $H$) equal to $P$. As $F_s$ is of codimension $1$ we have that
$N_s$ is a rank one torsion free sheaf. As $X$ is smooth, $N^{\vee\vee}$ is locally free owing to \cite[Lemma 6.13]{Kol}. Moreover, $N^{\vee\vee}$ must coincide with $\det(N)$ whose formation commutes with base change \cite[p.39]{HL}.

Since $N_s$ is torsion free for every $s\in S$, due to Lemma \ref{cor:injfib}, we have an injection $N\hookrightarrow N^{\vee \vee}$ hence we have
\[
0 \longrightarrow N \otimes N^{\vee} \longrightarrow N^{\vee \vee} \otimes N^{\vee} = \mathcal{O}_{X\times S}   \longrightarrow \mathcal{O}_{Z} \longrightarrow 0 
\]
where $Z \subset X\times S$ is flat over $S$, and $Z_s$ is the singular scheme of the distribution defined by \eqref{seq:distrest}. This construction commutes with base change and thus defines a morphism $\Sigma \colon \mathcal{D}^P(X)\longrightarrow {\rm Hilb}^Q(X)$, where $Q$ is a polynomial depending on $P$.

\end{proof}

In the study of the morphism $\Sigma$ it is useful to understand its fibers. Let $X$ and $\sF \in \mathcal{D}^P(X)$ be as in Theorem \ref{sing:map}, and set $Z = \sing(\sF )$. In addition, assume that $\Pic(X) = \mathbb{Z}\cdot H$ where $H$ is the ample generator; we then have that $Q(t)=P_{\mathcal{O}_X}(t)-P_{TX}(t+c)+P(t+c)$, where $c H = c_1(TX) - c_1(T_{\sF})$.

It then follows that any distribution on $X$ singular at $Z$ is defined by a twisted $1$-form $\omega \in H^0(\Omega^1_X(c) \otimes \mathcal{I}_Z)$. In particular, the fiber of $\Sigma$ over $Z$ lies inside $$\mathbb{P}(H^0(\Omega^1_X(c) \otimes \mathcal{I}_Z)).$$ A priori, the singular scheme of $\omega$ could be larger, even of codimension one. However, the locus of $1$-forms vanishing only on $Z$ is a (nonempty) Zariski open set, thanks to the flattening stratification \cite[Chapter 8]{MUM:LCAS}. 

\begin{prop} \label{fibra}
Let $X$ be a smooth projective scheme over an algebraically closed field $\kappa$. Assume further that ${\rm Pic}(X)\simeq \mathbb{Z}$. Let $\sF$ be a codimension one distribution on $X$. Then the fiber
$$ \Sigma^{-1}\left(\Sigma(\sF ) \right)\subset \mathbb{P}\!\left(H^0(\Omega^1_{X}(c)\otimes\mathcal{I}_Z)\right) $$
parameterizing distributions with singular scheme equal to $Z$ is Zariski open.
\end{prop}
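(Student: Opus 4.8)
The plan is to realize the fiber as the minimal Hilbert polynomial stratum of a flattening stratification on $B := \mathbb{P}(V)$, where $V := H^0(\Omega^1_X(c)\otimes\iz)$, and then invoke the openness of that stratum. First I would record the set-theoretic description already prepared in the paragraph preceding the statement. A codimension one distribution has normal sheaf of rank one, which (since $\mathrm{Pic}(X)\simeq\mathbb{Z}$) is of the form $\mathcal{I}_{Z'}(c)$; dualizing $\phi$ and taking the top exterior power identifies a distribution whose singular scheme contains $Z$ with a twisted one-form $\omega\in V$, i.e. a map $\omega\colon TX\to\oo_X(c)$ whose image $\mathrm{im}(\omega)=\mathcal{I}_{Z'}(c)$ satisfies $Z'\supseteq Z$. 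A point $[\sF']$ lies in $\Sigma^{-1}(\Sigma(\sF))$ precisely when $Z'=Z$, that is, when $\omega$ vanishes exactly along $Z$. Thus, as a set,
$$\Sigma^{-1}(\Sigma(\sF))=\{[\omega]\in B : \sing(\omega)=Z\},$$
and it remains to show this subset is Zariski open; it is nonempty, since $\sF$ itself belongs to it.

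Next I would build the tautological family over $X\times B$. Writing $p,q$ for the two projections and using the tautological quotient of $B=\mathbb{P}(V)$, the inclusion $V\subset\Hom(TX,\oo_X(c))$ yields a universal morphism
$$\Phi\colon p^*TX\longrightarrow p^*\oo_X(c)\otimes q^*\oo_B(1),$$
which restricts over $[\omega]$ to $\omega$ up to scalar and factors through $p^*\iz(c)\otimes q^*\oo_B(1)$. Setting $\mathcal{N}:=\coker(\Phi)$, the fiber $\mathcal{N}_{[\omega]}$ equals $\oo_{Z'_\omega}(c)$, where $Z'_\omega=\sing(\omega)$. Applying the flattening stratification \cite[Chapter 8]{MUM:LCAS} to $\mathcal{N}$ relative to $q$ partitions $B$ into finitely many locally closed strata $B_{P'}$, on which $\mathcal{N}$ becomes $q$-flat with constant Hilbert polynomial $P'$. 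Because the closure of each stratum is contained in the union of strata with larger Hilbert polynomial, the stratum realizing the minimal Hilbert polynomial is Zariski open in $B$.

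Finally I would match the minimal stratum with the fiber. Since every $\omega\in V$ vanishes along $Z$, one has $Z'_\omega\supseteq Z$ for all $[\omega]\in B$, so there is a surjection $\oo_{Z'_\omega}\twoheadrightarrow\oo_{Z}$; hence the Hilbert polynomial of $\mathcal{N}_{[\omega]}=\oo_{Z'_\omega}(c)$ is bounded below by that of $\oo_{Z}(c)$, with equality forcing $\mathcal{I}_Z/\mathcal{I}_{Z'_\omega}=0$, i.e. $Z'_\omega=Z$. Therefore the minimal stratum is exactly $\{[\omega]\in B : \sing(\omega)=Z\}=\Sigma^{-1}(\Sigma(\sF))$, which is thus open and nonempty. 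The main obstacle I anticipate is essentially bookkeeping: constructing $\Phi$ and its cokernel as a genuine coherent family and confirming via the flattening stratification that the least Hilbert polynomial stratum is the open one, rather than any deep geometry. The passage from ``equal Hilbert polynomials'' to ``equal subschemes'' is immediate once $Z'_\omega\supseteq Z$ is known, and the identification of the open stratum with the scheme-theoretic fiber of $\Sigma$ follows from the fact that, for every $[\omega]$ in it, the tangent sheaf $\ker(\omega)$ has the fixed Hilbert polynomial $P=P_{TX}-P_{\mathcal{I}_Z(c)}$.
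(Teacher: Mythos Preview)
Your proposal is correct and follows essentially the same approach as the paper's proof: both set up the tautological family of twisted $1$-forms over $\mathbb{P}(H^0(\Omega^1_X(c)\otimes\mathcal{I}_Z))$, apply the flattening stratification to the cokernel, and use the inclusion $Z\subset\sing(\omega)$ to identify the fiber with the minimal Hilbert polynomial stratum, which is open. The only cosmetic difference is that the paper stratifies with respect to $\mathcal{O}_{\mathcal{Z}}$ while you stratify with respect to its twist $\mathcal{N}=\mathcal{O}_{\mathcal{Z}}(c)\boxtimes\mathcal{O}_B(1)$, which of course yields the same stratification.
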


\begin{proof}
Let $S = \mathbb{P}\left(H^0(X, \Omega^1_{X}(c) \otimes \mathcal{I}_Z)\right)$ and consider $\omega_S \in H^0\!\left(X_S, \Omega^1_{X}(c) \boxtimes \mathcal{O}_S(1)\right)$ the ``tautological" $1$-form - here $\boxtimes$ means the external tensor product. Then we have the family $\mathcal{Z}$ of subschemes defined by 
\[
 TX(-c) \boxtimes \mathcal{O}_S(-1)\stackrel{\omega_S}{\longrightarrow} \mathcal{O}_{X_S} \longrightarrow \mathcal{O}_\mathcal{Z} \longrightarrow 0
\]
Note that for each point $s\in S$ the fiber $\mathcal{Z}_s$ is the singular scheme of $\omega_s= \omega_S|_{X_s}$, the restriction of $\omega_S$ to the fiber over $s$. We may apply the flattening stratification to $\mathcal{O}_\mathcal{Z}$, \cite[Corollary, p.60]{MUM:LCAS}. 

It follows that the map $s \mapsto \chi(\mathcal{O}_{\mathcal{Z}_s}(t))$ is locally constant. We denote its finite set of values, by $N$. Then each $f \in N$ corresponds to a stratum $S_f$ whose underlying set $|S_f|$ is composed by the closed points $s\in S$ such that $ \chi(\mathcal{O}_{\mathcal{Z}_s}(t)) = f(t)$. Moreover the Zariski closure of $|S_f|$ is contained in $\cup_{g\geq f} |S_g|$, where $f>g$ if and only if $f(t) > g(t)$ for $t\gg 0$. 

Let $p$ be the Hilbert polynomial of $Z$. Note $p\in N$ since $Z$ is the singular scheme of $\sF $. 
We have that $Z\subset \mathcal{Z}_s$ for every $s\in S$ (closed) hence $p$ is minimal in $N$. Therefore $S_p\subset S$ is a nonempty open subscheme. 
\end{proof}

Note that the tautological $1$-form restricted to $S_p$ induces the universal family of the fiber:
\[
0 \longrightarrow T_\sF \longrightarrow TX_{S_p} \stackrel{\omega_{S_p}}{\longrightarrow} \mathcal{I}_\mathcal{Z}\otimes \mathcal{O}_{S_p}(1)\boxtimes \mathcal{O}_X(c) \longrightarrow 0 
\]

\begin{rem}
Still assuming that  $\Pic(X) = \mathbb{Z}$, consider $U := \Sigma \left( \mathcal{D}^P(X)\right)$, the image of $\Sigma$. Proposition \ref{fibra} shows that the fiber over each point of $U$ is an open subset of a projective space. In addition, let $\mathcal{U}$ denote the restriction of the universal ideal sheaf on $X\times\Hilb^Q(X)$ to $X\times U$; one can then consider the projective scheme ($\pi_X$ and $\pi_U$ are the natural projections)
$$ \mathcal{P} := \mathbb{P}\!\left( {\pi_U}_*\left(\mathcal{U} \otimes \pi_X^\ast\Omega^1_X(c)\right) \right) $$
which contains $\mathcal{D}^P(X)$ as a subset. 
The natural question that arises is whether $\mathcal{D}^P(X)$ is included in $\mathcal{P}$ as an open subscheme. Note that if $h^i(\Omega^1_X(c)\otimes \iz)= 0$ for $i>0$ and every $Z\in U$, then $\mathcal{P}$ is the projectivization of a vector bundle over $U$, see \cite[III, Theorem 12.11]{HART:AG}.
\end{rem}

\section{Syzygies and Codimension one Distributions on \texorpdfstring{$\mathbb{P}^n$}{Pn}}
\label{secprn}
The degree of a codimension one distribution $\sF$ on $\pn$ is geometrically defined by the tangency between $\sF$ and a general line $L$. If a global section of $\Omega_\pn^1(k)$ defines $\sF$ then its restriction to $L$ defines a degree $k-2$ divisor (on $L$), called the tangency divisor. Thus the degree is defined by $d=k-2$, see \cite[section 2.5]{CJV}. 
In this particular case, the sequence \eqref{eq:Dist} becomes
\begin{equation}\label{seq:distpn}
    0 \longrightarrow T_\sF \stackrel{\phi}{ \longrightarrow} \tpn  \longrightarrow \iz(d+2) \longrightarrow 0.
\end{equation}

Given the singular scheme $Z$ of a degree $d$ distribution on $\mathbb{P}^n$, the distributions singular at $Z$ are defined by twisted $1$-forms vanishing on $Z$. We will now show a correspondence between these forms and linear first syzygies of the homogeneous ideal $I(Z)$, provided that $Z$ is not contained in a hypersurface of degree $\leq d$.

As a consequence of the classification obtained in \cite[Section 8]{CCJ}, every codimension one distribution of degree 1 on $\p3$ satisfies $h^1(T_\sF(-2))=0$. Thus \eqref{seq:distpn} implies that $h^0(\iz(d))=h^1(T_\sF(-2))=0$, i.e., $Z= \sing(\sF)$ cannot be contained in a hypersurface of degree $\leq d$. We prove below that this condition is also verified for codimension one distributions on $\pn$ with zero-dimensional singular scheme, which we call generic.

\begin{defi}
A codimension one distribution on $\pn$ is said to be \emph{generic} if its singular scheme is either empty or a zero-dimensional subscheme of $\pn$. 
\end{defi}

The previous definition is justified by the fact that codimension one distributions of degree $d$ with zero-dimensional singular scheme form an open subset of $\mathbb{P}(H^0(\opn(d+2)))$.

\begin{lema}\label{lem:generic}
Let $Z$ be a zero-dimensional subscheme of $\mathbb{P}^n$. If $Z$ is the singular scheme of a degree $d$ codimension one distribution on $\pn$ then $Z$ is not contained in a hypersurface of degree $\leq d$.
\end{lema}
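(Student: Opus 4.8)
The plan is to realize $Z$ as the zero scheme of the twisted $1$-form defining $\sF$ and to exploit that this zero scheme has the maximal possible codimension. Writing the degree $d$ distribution as a section $\omega\in H^0(\Omega^1_{\pn}(d+2))$, the singular scheme $Z=\sing(\sF)$ is exactly the zero scheme of $\omega$, cut out by the image of the contraction $\omega\colon \tpn(-d-2)\to\opn$. Since $E:=\Omega^1_{\pn}(d+2)$ is a vector bundle of rank $n=\dim\pn$ and $Z$ is assumed zero-dimensional, the section $\omega$ vanishes in the expected codimension $n$; hence $\omega$ is a regular section on the smooth (so Cohen--Macaulay) variety $\pn$, and its Koszul complex
\[
0\longrightarrow \bigwedge^{n}E^{\vee}\longrightarrow\cdots\longrightarrow \bigwedge^{2}E^{\vee}\longrightarrow E^{\vee}\stackrel{\omega}{\longrightarrow}\opn\longrightarrow\mathcal{O}_{Z}\longrightarrow 0
\]
is a locally free resolution of $\mathcal{O}_Z$. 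The goal then becomes to show that $H^0(\iz(e))=0$ for every $e\leq d$, which is precisely the assertion that no hypersurface of degree $\leq d$ contains $Z$.

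Using $\bigwedge^{k}\tpn\cong\Omega^{n-k}_{\pn}(n+1)$, the terms of the resolution are $K_k:=\bigwedge^{k}E^{\vee}=\Omega^{n-k}_{\pn}\big(n+1-k(d+2)\big)$. I would split the Koszul complex into short exact sequences $0\to B_{k+1}\to K_k\to B_k\to 0$, where $B_1=\iz$, $B_k=\ker(K_{k-1}\to K_{k-2})$, and $B_n=K_n$. Twisting by $\opn(e)$ and reading off the long exact sequences, one obtains for each $k$ a fragment
\[
H^{k-1}(K_k(e))\longrightarrow H^{k-1}(B_k(e))\longrightarrow H^{k}(B_{k+1}(e)),
\]
so that whenever $H^{k-1}(K_k(e))=0$ the middle map $H^{k-1}(B_k(e))\hookrightarrow H^{k}(B_{k+1}(e))$ is injective. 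Stringing these together yields injections
\[
H^0(\iz(e))\hookrightarrow H^1(B_2(e))\hookrightarrow\cdots\hookrightarrow H^{n-1}(B_n(e))=H^{n-1}(K_n(e)).
\]

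The heart of the argument is then the vanishing $H^{k-1}(K_k(e))=0$ for all $1\le k\le n$ and all $e\le d$. This is a direct application of Bott's formula to $H^{k-1}\big(\Omega^{n-k}_{\pn}(n+1-k(d+2)+e)\big)$: writing $p=n-k$ and $q=k-1$, the three nonvanishing regimes of Bott (namely $q=0$ with twist $>p$; $q=p$ with twist $0$; and $q=n$) are each ruled out once $e\le d$ and $n\ge 2$ --- the first forces $e\ge d+1$, the second forces $e=(n+1)d/2>d$, and the third never occurs since $q\le n-1$. In particular the terminal term $H^{n-1}(K_n(e))=H^{n-1}(\opn(n+1-n(d+2)+e))$ vanishes because intermediate cohomology of line bundles on $\pn$ is zero. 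Feeding this back through the chain of injections gives $H^0(\iz(e))=0$, as desired.

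The main obstacle I anticipate is twofold. First, one must justify that $\omega$ is genuinely a regular section, so that the Koszul complex is exact; this is exactly where the zero-dimensionality hypothesis enters, via the standard fact that a section of a rank-$r$ bundle on a Cohen--Macaulay variety whose zero locus has codimension $r$ is regular. Second, there is the careful bookkeeping of the Bott vanishings, making sure the inequalities point the right way uniformly in $k$ and in $e\le d$. Everything else is formal dévissage. (Throughout one has $n\ge 2$, since codimension one distributions with zero-dimensional singular scheme only occur there.)
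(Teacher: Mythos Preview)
Your proposal is correct and follows essentially the same approach as the paper: both use the Koszul resolution of $\mathcal{I}_Z$ coming from the regular section $\omega$, split it into short exact sequences, and chase injections through the cohomology using Bott-type vanishing for $H^{k-1}\big(\bigwedge^{k}\tpn(-k(d+2)+e)\big)$ until reaching a vanishing intermediate cohomology group of a line bundle. The only cosmetic differences are that you rewrite the terms via $\bigwedge^{k}\tpn\cong\Omega^{n-k}_{\pn}(n+1)$ and invoke Bott's formula explicitly, and you treat all $e\le d$ at once rather than only $e=d$ (which suffices, since $H^0(\iz(e))$ injects into $H^0(\iz(d))$ by multiplication).
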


\begin{proof}
Since there are no hypersurfaces of degree zero, the conclusion holds vacuously for $d\leq 0$; we may assume $d\geq1$. Let $\mathcal{I}_Z$ be the ideal sheaf of $Z$. To prove that $Z$ is not contained in a hypersurface of degree at most $d$ we will show that $H^0(\mathcal{I}_Z(d))= \{0\}$.

Since $\mathcal{I}_Z$ is the image of a morphism $\omega \colon \tpn(-d-2) \longrightarrow \mathcal{O}_{\mathbb{P}^n}$, the hypothesis on the dimension implies that $Z$ is locally a complete intersection, thus the associated Koszul complex is exact. Then we may use it to compute $H^0(\mathcal{I}_Z(d))$. Twisting the Koszul resolution by $\mathcal{O}_{\mathbb{P}^n}(d)$ and splitting it into short exact sequences yields the following.
\begin{align*}
 0 \longrightarrow \mathcal{O}_{\mathbb{P}^n}((d+1)(1-n)) \longrightarrow &\bigwedge\limits^{n-1}\tpn(-(n-1)(d+2)+d) \longrightarrow F_{n-2} \longrightarrow 0 \\
 & \vdots \\
 0 \longrightarrow F_j \longrightarrow &\bigwedge\limits^j \tpn(-j(d+2)+d) \longrightarrow F_{j-1} \longrightarrow 0 \\
 &\vdots \\
 0 \longrightarrow F_1 \longrightarrow &\tpn(-2) \longrightarrow \mathcal{I}_Z(d) \longrightarrow 0
\end{align*}
Due to Bott's formulas \cite[p.4]{OSS} and Serre duality, we have that for $1 \leq j \leq n-1$ and $d\geq 1$, 
$$
H^{j-1}\!\left(\bigwedge^j \tpn(-j(d+2)+d)\right) = \{0\}.
$$
Then the long exact sequences of cohomology yields the following injections
\[
H^0(\mathcal{I}_Z(d))\hookrightarrow H^1(F_1) \hookrightarrow \dots \hookrightarrow H^{n-2}(F_{n-2}) \hookrightarrow H^{n-1}(\mathcal{O}_{\mathbb{P}^n}((d+1)(1-n)) ) = \{0\}.
\]
This completes the proof.
\end{proof}

\begin{rem}
The previous lemma has an alternative proof in the case $n=3$. Let 
$$ \sF ~:~ 0 \longrightarrow T_\sF \longrightarrow \tp3 \longrightarrow \iz(d+2) \longrightarrow 0 $$
be a generic codimension one distribution of degree $d$ on $\p3$; set $Z:=\sing(\sF)$. Since $\dim Z=0$, we have that $\inext^1(\iz,\op3)=0$. Thus dualizing the exact sequence above, we obtain the exact sequence
$$ 0 \longrightarrow \op3(-d-2) \longrightarrow \Omega^1_{\mathbb{P}^3} \longrightarrow T_\sF(d-2) \longrightarrow 0 $$
where we used the identity $T_\sF^\vee\simeq T_\sF(d-2)$ valid for rank 2 reflexive sheaves. Using these two sequences we have that $h^0(\iz(d))=h^1(T_\sF(-2))=0$, as desired. 
\end{rem}

Furthermore, we can also check that if $\sF$ is a codimension one distribution on $\pn$ whose tangent sheaf splits as a sum of line bundles, then $\sing(\sF)$ is not contained in a hypersurface of degree $d$. Indeed, if $T_\sF$ splits then  $h^1(T_\sF(-2))=0$. 

\begin{rem}
This discussion leads to the following natural question: does there exist a degree $d$ codimension one distribution $\sF$ on $\pn$ such that $\sing(\sF)$ is contained in a hypersurface of degree $d$? 
\end{rem}

\begin{prop}\label{psyz}
Let $Z \subset \mathbb{P}^n$ be a closed subscheme and let $d\geq 1$ be an integer. Suppose that $Z$ is not contained in a hypersurface of degree less than or equal to $d$. Then there exists a linear isomorphism between the spaces of degree $d+2$ twisted $1$-forms singular at $Z$ and linear first syzygies of the homogeneous ideal $I(Z)$. 
\end{prop}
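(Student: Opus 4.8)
The plan is to convert both sides into linear algebra over $S=\kappa[x_0,\ldots,x_n]$ by means of the Euler sequence, and then to write down an explicit change-of-coefficients isomorphism. First I would unwind the definitions: a degree $d+2$ twisted $1$-form singular at $Z$ is exactly an element of $H^0(\Omega^1_{\pn}(d+2)\otimes\iz)$, so the task is to compute this vector space. The key tool is the Euler sequence twisted by $d+2$,
\[
0 \longrightarrow \Omega^1_{\pn}(d+2) \longrightarrow \opn(d+1)^{\oplus(n+1)} \longrightarrow \opn(d+2) \longrightarrow 0,
\]
which I would tensor with $\iz$. Since the quotient term $\opn(d+2)$ is locally free, $\mathcal{T}\!or_1(\opn(d+2),\iz)=0$, so the tensored sequence remains short exact:
\[
0 \longrightarrow \Omega^1_{\pn}(d+2)\otimes\iz \longrightarrow \iz(d+1)^{\oplus(n+1)} \longrightarrow \iz(d+2) \longrightarrow 0.
\]
Taking $H^0$ and using its left exactness then identifies $H^0(\Omega^1_{\pn}(d+2)\otimes\iz)$ with the kernel of the contraction map $H^0(\iz(d+1))^{\oplus(n+1)}\to H^0(\iz(d+2))$, $(f_0,\ldots,f_n)\mapsto\sum_i x_i f_i$.

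Next I would pass to the saturated homogeneous ideal via $H^0(\iz(k))=I(Z)_k$. The hypothesis that $Z$ lies on no hypersurface of degree $\leq d$ says precisely that $I(Z)_k=0$ for all $k\leq d$; hence every element of $I(Z)_{d+1}$ is a minimal generator. Fixing a basis $g_1,\ldots,g_r$ of $I(Z)_{d+1}$, I would write each $f_i=\sum_j c_{ij}g_j$ with scalars $c_{ij}$ and set $\ell_j:=\sum_i c_{ij}x_i\in S_1$. The assignment $(f_0,\ldots,f_n)\mapsto(\ell_1,\ldots,\ell_r)$ is a linear isomorphism $I(Z)_{d+1}^{\oplus(n+1)}\xrightarrow{\sim}(S_1)^{\oplus r}$ — both sides merely repackage an $(n+1)\times r$ scalar matrix, and both have dimension $(n+1)r$ — under which $\sum_i x_i f_i=\sum_j \ell_j g_j$. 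Therefore the kernel computed above maps isomorphically onto
\[
\Bigl\{(\ell_1,\ldots,\ell_r)\in(S_1)^{\oplus r}\ :\ \textstyle\sum_j \ell_j g_j=0\Bigr\},
\]
which is exactly the space of linear first syzygies among the degree-$(d+1)$ minimal generators of $I(Z)$.

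The point to be careful about — more a matter of bookkeeping than a deep obstacle — is identifying this right-hand side with the genuine \emph{linear first syzygies of $I(Z)$} in the minimal-resolution sense. This is where the hypothesis is essential: because there are no generators in degree $\leq d$, the $g_j$ are honest minimal generators, and a degree-$(d+2)$ relation among minimal generators can carry no constant entries (by minimality) and can involve only the degree-$(d+1)$ generators, so it is automatically of the form $\sum_j \ell_j g_j=0$ with $\ell_j$ linear. Without this assumption the $g_j$ need not be minimal and spurious Koszul-type relations would appear, so the degree condition is precisely what makes the correspondence clean. The remaining verifications — exactness after tensoring, left exactness of $H^0$, and linearity of the repackaging map — are routine.
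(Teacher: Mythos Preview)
Your proof is correct, but it follows a different route from the paper's formal argument. The paper tensors the \emph{second} exterior power of the Euler sequence with $\iz(d+2)$, obtaining
\[
0 \longrightarrow \Omega^2_{\pn}(d+2)\otimes\iz \longrightarrow \iz(d)^{\oplus\binom{n+1}{2}} \longrightarrow \Omega^1_{\pn}(d+2)\otimes\iz \longrightarrow 0,
\]
and uses the hypothesis $H^0(\iz(d))=0$ to get an injection $\phi\colon H^0(\Omega^1_{\pn}(d+2)\otimes\iz)\hookrightarrow H^1(\Omega^2_{\pn}(d+2)\otimes\iz)$; it then invokes \cite[Theorem~5.8]{EI:SYZ5} to identify the image of $\phi$ with $\Tor^S_1(I(Z),\kappa)_{d+2}$. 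You instead use the Euler sequence itself, identify $H^0(\Omega^1_{\pn}(d+2)\otimes\iz)$ with the kernel of the contraction $I(Z)_{d+1}^{\,\oplus(n+1)}\to I(Z)_{d+2}$, and then repackage that kernel by hand as the linear relations among the degree-$(d+1)$ generators. Your argument is self-contained and more elementary --- it does not require the external citation --- and in fact it is precisely the explicit correspondence the paper spells out in the paragraphs immediately \emph{after} its proof of the proposition (where it writes $\omega=\sum F_i\,dx_i$, expands $F_i=\sum f_{ij}G_j$, and reads off the syzygy from $\iota_R\omega=0$). The paper's route has the virtue of naming the target space intrinsically as a graded piece of $\Tor_1$; your route makes the isomorphism completely explicit and avoids appealing to a black box. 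One small point worth making explicit in your write-up: the identification of your space of relations with $\Tor^S_1(I(Z),\kappa)_{d+2}$ (rather than merely with the degree-$(d+2)$ syzygies) uses that $K_{d+1}=0$, which follows because the $g_j$ are linearly independent over $\kappa$; you have the ingredients for this but do not quite state it.
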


\begin{proof}
The space of degree $d+2$ twisted $1$-forms singular at $Z$ is $H^0( \Omega^1_{\mathbb{P}^n}(d+2) \otimes \mathcal{I}_Z) $, where $\mathcal{I}_Z$ is the ideal sheaf of $Z$. Consider the saturated homogeneous ideal is $I(Z) = \bigoplus_j H^0(\mathcal{I}_Z(j))$. By hypothesis, $H^0(\mathcal{I}_Z(j)) = \{0\}$ for $j\leq d$ hence the space of linear first syzygies of $I(Z)$ is ${\rm Tor}^S_1(I(Z), \kappa)_{d+2}$, where $S = \kappa[x_0, \dots, x_n]$. 

Consider the second exterior power of the Euler sequence twisted by $\mathcal{I}_Z(d+2)$:
\[
0 \longrightarrow \Omega^2_{\mathbb{P}^n}(d+2) \otimes \mathcal{I}_Z \longrightarrow \mathcal{I}_Z(d)^{\oplus \binom{n+1}{2}} \longrightarrow \Omega^1_{\mathbb{P}^n}(d+2) \otimes \mathcal{I}_Z \longrightarrow 0
\]
which is exact since ${\rm Tor}_1^{\mathcal{O}_{\mathbb{P}^n}}(\Omega^1_{\mathbb{P}^n}(d+2) , \mathcal{I}_Z) = \{0\}$. From the long sequence of cohomology we have that
\[
H^0(\mathcal{I}_Z(d))^{\oplus \binom{n+1}{2}} \rightarrow H^0( \Omega^1_{\mathbb{P}^n}(d+2) \otimes \mathcal{I}_Z) \stackrel{\phi}{\longrightarrow} H^1( \Omega^2_{\mathbb{P}^n}(d+2) \otimes \mathcal{I}_Z) \rightarrow H^1(\mathcal{I}_Z(d))^{\oplus \binom{n+1}{2}}
\]
is exact. We have that $\phi$ is injective, by hypothesis, and from \cite[Theorem 5.8]{EI:SYZ5} the image of $\phi$ is precisely ${\rm Tor}^S_1(I(Z), \kappa)_{d+2}$. Thus we have the desired isomorphism.
\end{proof}

Now we present this correspondence in an effective way. Fix $Z\subset \mathbb{P}^n$ a closed subscheme with ideal sheaf $\mathcal{I}_Z$. Suppose, as above, that $h^0(\mathcal{I}_Z(d))=0$. Also fix $\{G_1, \dots, G_r\}$ a (linear) basis of $H^0(\mathcal{I}_Z(d+1))$. 

First let 
\[
\omega = F_0dx_0 + \dots + F_n dx_n 
\]
be a degree $d+2$ twisted $1$-form vanishing on $Z$, i.e., $F_0 ,\dots, F_n \in H^0(\mathcal{I}_Z(d+1))$. Then $F_j = \sum f_{ij} G_j$ for $f_{ij}\in \kappa$. It follows that
\[
0 = \iota_R \omega = \sum_{i=0}^n x_i F_i = \sum_{j=1}^r\left(\sum_{i=0}^n x_i f_{ij}\right) G_j
\]
hence $\left(\sum_{i=0}^n x_i f_{i1}, \dots , \sum_{i=0}^n x_i f_{ir}\right)$ provides a first syzygy for any set of generators of $I(Z)$ containing $\{G_1, \dots, G_r\}$. 

Conversely, fix a minimal generating set $\{G_1, \dots, G_r, H_1 \dots,H_k\}$ for $I(Z)$ such that $\deg H_j \geq d+2$ for every $j$. Then a linear first syzygy for this set of generators can be written as $(L_1, \dots L_r, 0 ,\dots ,0)$ for $L_j$ linear homogeneous polynomials. Define 
\[
\omega = \sum_{j=1}^r G_j dL_j.
\]
Note that $\iota_R \omega = \sum_{j=1}^r G_jL_j =0$ hence $\omega$ defines a twisted $1$-form on $\mathbb{P}^n$ that is clearly singular at $Z$. 

\begin{rem}\label{algo}
The discussion above leads to an algorithm to compute the generic $1$-form $\omega$ vanishing on a subscheme $Z$. It goes as follows: The input is the ideal $I(Z)$ and the output is the generic $1$-form $\omega$ singular at $Z$.
\begin{itemize}
 \item Take $G = (F_0, \dots , F_r)$ an ordered minimal generating set for $I(Z)$;
 \item Compute $M$ the matrix of linear syzygies of $G$ and let $k+1$ be its number of columns;
 \item Define additional variables $(t_0, \dots, t_k)$;
 \item Return $$\omega := G\cdot dM\cdot (t_0, \dots, t_k)^T,$$ where $dM$ stands for the entrywise exterior derivative and the dots are matrix products.
\end{itemize}
\end{rem}

To conclude this section we take a closer look at generic codimension one distributions. For $d \geq n$, such distributions are determined by their singular schemes, see \cite[Theorem 1.4]{CoA}. However a careful reader will notice that their argument works in more generality. Indeed it holds for every $d$ and $n$, except for $d=1$ and $n=2,3$.

On the other hand, we argue that generic codimension one distributions of degree 1 on $\p2$ or $\p3$ are not determined by their singular schemes. Indeed, for $n=2$ one can easily define non isomorphic foliations with three nondegenerate singularities in general position, see \cite[p. 882]{CO:plane}. For $n=3$ we will use our syzygetic approach to show that there exists a $4$-dimensional family of distributions with a given 0-dimensional singular scheme. 

\begin{teo}\label{resolution}
Let $\sF $ be a degree $d\geq 1$ distribution on $\mathbb{P}^3$. If $Z = {\rm Sing}(\sF )$ is zero-dimensional then the homogeneous ideal $I(Z)$ has minimal graded free resolution given by
$$
0 \longrightarrow S(-3d-2) \longrightarrow S(-2d-1)^4\oplus S(-d-2) \longrightarrow S(-d-1)^4 \oplus S(-2d) \longrightarrow I(Z) \longrightarrow 0 
$$
where $S = \kappa[x_0, \dots, x_3]$. 
Moreover, the space of distributions singular at $Z$ has dimension $4$ if $d=1$ and, if $d\geq 2$ then $\sF $ is the unique degree $d$ distribution singular at $Z$. 
\end{teo}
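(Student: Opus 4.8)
The plan is to produce the resolution by starting from the defining data of the distribution and assembling it via a mapping-cone / Koszul-type argument. First I would fix the degree-$d$ distribution as a twisted $1$-form $\omega \colon T\mathbb{P}^3(-d-2)\to\mathcal{O}_{\mathbb{P}^3}$ whose image is $\mathcal{I}_Z(d+2)$, so that $Z=\sing(\sF)$ is zero-dimensional. Since $\dim Z=0$, the associated Koszul complex of $\omega$ is exact (this is the hypothesis exploited in Lemma \ref{lem:generic}), giving a locally free resolution of $\mathcal{I}_Z(d+2)$ by exterior powers $\bigwedge^j T\mathbb{P}^3(-j(d+2))$. Dualizing the distribution sequence, as in the Remark after Lemma \ref{lem:generic}, yields
$$ 0 \longrightarrow \op3(-d-2) \longrightarrow \Omega^1_{\mathbb{P}^3} \longrightarrow T_\sF(d-2) \longrightarrow 0, $$
and from Lemma \ref{lem:generic} we know $h^0(\iz(d))=0$, i.e. $Z$ lies on no hypersurface of degree $\le d$. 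This is exactly the hypothesis needed to invoke Proposition \ref{psyz}: the degree-$(d+2)$ twisted $1$-forms singular at $Z$ correspond bijectively to the linear first syzygies $\mathrm{Tor}^S_1(I(Z),\kappa)_{d+2}$.

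The core step is to pin down the graded Betti numbers. I would compute the Hilbert polynomial $Q$ of $Z$ from the Chern data (equivalently from Theorem \ref{sing:map}); for the degree-$d$ case on $\mathbb{P}^3$ the length of $Z$ and the relevant cohomology are forced by the two exact sequences above together with the Euler sequence. The shifts in the claimed resolution are dictated by three pieces of information: the generators of $I(Z)$ sit in degrees $d+1$ (there should be four of them, the four components $F_0,\dots,F_3$ of $\omega$, which lie in $H^0(\iz(d+1))$) and in degree $2d$; the first syzygies appear in degrees $2d+1$ (four of them) and $d+2$; and the single second syzygy lives in degree $3d+2$. The explicit generators come from the effective description preceding Remark \ref{algo}: the four $G_j\in H^0(\iz(d+1))$ from $\omega=\sum F_j\,dx_j$, and an additional generator in degree $2d$ coming from the Pfaffian/contraction structure. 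I would verify exactness and minimality by a Koszul-complex computation: splice the dualized distribution sequence against the Euler sequence, take the mapping cone, and check that no unit entries appear in the differentials (minimality), while the ranks and twists match the alternating-sum constraint imposed by $Q$.

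The final dimension count follows from Proposition \ref{psyz} and the Betti table: the fiber of $\Sigma$ over $Z$ is (an open subset of) $\mathbb{P}(H^0(\Omega^1_{\mathbb{P}^3}(d+2)\otimes\iz))\cong\mathbb{P}(\mathrm{Tor}^S_1(I(Z),\kappa)_{d+2})$. The resolution shows the linear first syzygies in degree $d+2$ form a space whose dimension I read off directly; when $d=1$ this degree coincides with $2d+1=3$, so the two syzygy strands overlap and contribute a $4$-dimensional space of forms, giving a $\mathbb{P}^3$ of distributions — hence the stated $4$. When $d\ge 2$ the degrees $d+2$ and $2d+1$ separate, the only first syzygy in degree $d+2$ is the single strand, and $H^0(\Omega^1(d+2)\otimes\iz)$ is one-dimensional, so $\sF$ is the unique distribution singular at $Z$.

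I expect the main obstacle to be rigorously establishing exactness and minimality of the claimed complex — in particular, identifying the extra generator in degree $2d$ and the matching first syzygy in degree $d+2$, and showing the differentials have no unit entries. The numerology (twists, ranks, and the Hilbert polynomial) is forced, but proving that the mapping-cone construction produces exactly this minimal resolution, rather than a non-minimal one requiring cancellation, is the delicate point; this is presumably where the Macaulay2 computations mentioned in the introduction were used to fix the intuition before a clean synthetic argument was found.
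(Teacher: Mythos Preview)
Your outline has the right ingredients at the edges --- the four coefficients $A_0,\dots,A_3\in H^0(\iz(d+1))$ of $\omega$ as generators, an extra degree-$2d$ generator coming from the Pfaffian of $d\omega$, and the dimension count via Proposition~\ref{psyz} --- but the middle step, producing and certifying the minimal free resolution by a mapping-cone argument, is where your plan is incomplete, and the paper takes a different and much cleaner route that you are missing.

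The key structural input you do not invoke is that $Z$ is \emph{arithmetically Gorenstein}: since $\dim Z=0$, the Koszul complex of $\omega$ resolves $\iz$, and this realises $\iz$ as the top degeneracy locus of a Buchsbaum--Rim sheaf, which forces the Gorenstein property (the paper cites \cite[Corollary~4.9]{MNP:B-Rim}). Once you know this, the Buchsbaum--Eisenbud structure theorem for height-$3$ Gorenstein ideals \cite{BE} hands you the resolution shape for free:
\[
0 \longrightarrow \textstyle\bigwedge^5 F \longrightarrow \bigwedge^4 F \longrightarrow F \longrightarrow I(Z) \longrightarrow 0,
\]
with $F$ a graded free module and the middle map skew-symmetric. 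All that remains is to identify $F=S(-d-1)^4\oplus S(-2d)$ from the generator degrees and read off the single shift from the obvious linear syzygy $(x_0,x_1,x_2,x_3,0)$. This bypasses entirely the delicate issues of exactness and minimality you flag as the main obstacle: self-duality and minimality come packaged in the structure theorem, and there is no mapping cone to trim. Your proposed route might be made to work, but it would amount to reproving a special case of Buchsbaum--Eisenbud.

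Two smaller points. First, to show the Pfaffian $P$ actually lies in $I(Z)$ (and not merely in the saturation by accident), the paper writes the Euler relation $\iota_R d\omega=(d+2)\omega$ as a matrix identity $B\cdot(x_0,\dots,x_3)^T=(d+2)(A_0,\dots,A_3)^T$ and uses the adjugate-of-Pfaffian identity to exhibit $P\cdot x_i$ explicitly as an $S$-combination of the $A_j$; your sketch gestures at this but does not supply the mechanism. Second, your count for $d=1$ is off by one: when $d=1$ the syzygy module is $S(-3)^4\oplus S(-3)=S(-3)^5$, so there are five linear first syzygies, giving a $5$-dimensional space of $1$-forms and hence a (projective) $4$-dimensional family, not a $\mathbb{P}^3$.
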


\begin{proof}
Let $\omega = \sum_{j=0}^3A_j dx_j$ be a $1$-form that defines $\sF $. We claim that $I(Z) = (A_0,A_1,A_2,A_3,P)$ where $P$ is the Pfaffian of the matrix defined by $d\omega$, i.e., $P = {\rm Pf}( J - J^T)$ for $J$ the jacobian matrix of $(A_0,A_1,A_2,A_3)$. It follows that $\deg P = 2d$ since $\deg A_j = d+1$. 

From \cite[Corollary 4.9]{MNP:B-Rim} we have that $Z$ is arithmetically Gorenstein. Then it follows from \cite[Section 4]{BE} that $I(Z)$ has a minimal resolution given by
$$
0 \longrightarrow \bigwedge^5F \longrightarrow \bigwedge^4 F \longrightarrow F \longrightarrow I(Z) \longrightarrow 0 
$$
where $F$ is a free $S$-module. Therefore $F = S(-d-1)^4 \oplus S(-2d)$ and the resolution becomes 
$$
0 \longrightarrow S(-6d-4+l) \stackrel{g^T}{\longrightarrow} S(-5d-3+l)^4\oplus S(-4d-4+l) \stackrel{f}{\longrightarrow} S(-d-1)^4 \oplus S(-2d) \stackrel{g}{\longrightarrow} I(Z) \longrightarrow 0 
$$
where $l$ accounts for the shift made so that every map has degree zero. Since $(x_0, \dots, x_3,0)$ is a first syzygy we can compute $l = 3d+2$ whence follows the statement. 

From the degrees in the resolution we can compute the degrees of the entries of the map $f$ to see that for $d\geq 2$ the only linear syzygies are multiples of $(x_0, \dots, x_3,0)$. In the case $d=1$ we have $-2d-1 = -d-2 =-3$ and $-d-1 = -2d =-2$, hence every entry of $f$ is linear. It follows from Propositions \ref{psyz} and \ref{fibra} that the space of distributions singular at $Z$ has (projective) dimension $4$ if $d=1$ and dimension $0$ (i.e. $\sF $ is unique) if $d\geq 2$.

Now we prove the claim. Let $I = (A_0,A_1,A_2,A_3)$ be the ideal generated by the coefficients of $\omega$. Recall that $I(Z) = \bigoplus_{l\in \mathbb{Z}} H^0( \iz(l))$ is the saturation of $I$. Since $\dim Z=0$, the ideal sheaf $\iz$ has a resolution given by the Koszul complex induced by $\omega$:
$$
0 \longrightarrow \bigwedge^3 \left(\tp3(-d-2) \right)\longrightarrow \bigwedge^2 \left(\tp3(-d-2) \right)\longrightarrow \tp3(-d-2) \longrightarrow \iz \longrightarrow 0 .
$$
We break it into short exact sequences that, with the appropriate identifications, are:
\begin{align}
0 \longrightarrow\mathcal{O}_{\mathbb{P}^3}(-3d-2)\longrightarrow \, & \Omega^1_{\mathbb{P}^3}(-2d) \longrightarrow T_\sF (-d-2) \longrightarrow 0, \label{1steq} \\
0 \longrightarrow T_\sF (-d-2)\longrightarrow \, & \tp3(-d-2) \longrightarrow \iz \longrightarrow 0. \label{2ndeq}
\end{align}
From \eqref{2ndeq} and the Euler sequence we note that the image of $H^0(\tp3(-d-2+l)) \rightarrow H^0(\mathcal{I}(l))$ is precisely $I_l$. On the other hand \eqref{1steq} and the Bott formulae imply 
$$
h^1(T_\sF (-d-2+l)) = \begin{cases} 0 & l \neq 2d \\ 1 & l= 2d
\end{cases}.
$$ 
Therefore $I_l = I(Z)_l$ except for $l=2d$ where the latter has one more generator. We only need to produce this missing generator. Since $I(Z)$ is the saturation of the ideal generated by $A_0, \dots, A_3$, we have that $s\in I(Z)$ if there exist $m\geq 1$ and a $4 \times 4$ matrix $Q$ with coefficients in $S$ such that 
$$
s\begin{bmatrix}
x_0^m \\ \vdots\\ x_3^m
\end{bmatrix} = Q \begin{bmatrix}
A_0 \\ \vdots\\ A_3
\end{bmatrix}.
$$
Let $B$ be the matrix defined by $d\omega$ so that the Euler relation $\iota_Rd\omega = (d+2)\omega$ translates to 
$$
B \begin{bmatrix}
x_0 \\ \vdots\\ x_3
\end{bmatrix} = (d+2)\begin{bmatrix}
A_0 \\ \vdots\\ A_3
\end{bmatrix}.
$$
To show that $P = \operatorname{Pf}(B) \in I(Z)$ we set $m=1$ and $Q = (q_{ij})$ where $(-1)^{i+j}q_{ij}$ is the Pfaffian of the submatrix of $B$ obtained by eliminating the rows $i,j$ and the columns $i,j$.
\end{proof}

From the previous discussion and the Theorem we derive the following corollary.

\begin{coro}\label{cor:generic}
Let $\sF $ be a generic codimension one distribution of degree $d$ on $\mathbb{P}^n$. Then $\sF $ is determined by its singular scheme if $(d,n) \notin \{(1,2),(1,3)\}$ .
\end{coro}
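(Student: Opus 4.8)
The goal is to show that the fibre $\Sigma^{-1}(\Sigma(\sF))$ consists of a single point, since $\sF$ is determined by $Z:=\sing(\sF)$ exactly when every distribution with singular scheme $Z$ is isomorphic to $\sF$. By Proposition \ref{fibra} this fibre is a Zariski-open subset of $\mathbb{P}\!\left(H^0(\Omega^1_{\pn}(d+2)\otimes\iz)\right)$, so the statement reduces to the cohomological equality
$$ h^0\!\left(\Omega^1_{\pn}(d+2)\otimes\iz\right)=1 \qquad\text{for } (d,n)\notin\{(1,2),(1,3)\}. $$
Since $\sF$ is generic, $Z$ is zero-dimensional with $d\geq 1$, so Lemma \ref{lem:generic} applies and $Z$ lies on no hypersurface of degree $\leq d$; Proposition \ref{psyz} then identifies the space above with the space of linear first syzygies of $I(Z)$. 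The radial contraction $\iota_R\omega=\sum_i x_iA_i=0$ of a defining form $\omega=\sum_iA_i\,dx_i$ always provides the Euler syzygy $(x_0,\dots,x_n)$, so I must prove that, outside the two exceptional pairs, this is the only linear syzygy up to scalar.

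For $n=3$ this is immediate from Theorem \ref{resolution}: the explicit minimal free resolution of $I(Z)$ shows that for $d\geq 2$ the linear part of the syzygy map is spanned by $(x_0,x_1,x_2,x_3,0)$, giving $h^0=1$, whereas the genuinely four-dimensional fibre produced for $d=1$ is precisely the excluded pair $(1,3)$.

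For the remaining ranges, $n=2$ with $d\geq 2$ and $n\geq 4$ with $d\geq 1$, the plan is to run the cohomology chase already used in Lemma \ref{lem:generic}, now against $\Omega^1_{\pn}(d+2)$. Because $Z$ is zero-dimensional the Koszul complex of $\omega\colon\tpn(-d-2)\to\oo_{\pn}$ is exact and resolves $\iz$; tensoring it with $\Omega^1_{\pn}(d+2)$ and splitting into short exact sequences, the only term carrying global sections is the diagonal term $\tpn\otimes\Omega^1_{\pn}=\mathcal{E}nd(\Omega^1_{\pn})$, whose $H^0$ is one-dimensional because $\Omega^1_{\pn}$ is stable, hence simple. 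All other terms have the form $\bigwedge^j\tpn\otimes\Omega^1_{\pn}\bigl(-(j-1)(d+2)\bigr)$ with $j\geq 2$, and Bott's formula makes their relevant intermediate cohomology vanish in this range. Chasing the long exact sequences then yields $h^0(\Omega^1_{\pn}(d+2)\otimes\iz)=1$. Alternatively, one may invoke \cite[Theorem 1.4]{CoA}, whose argument, as remarked above, carries over verbatim to every $(d,n)$ outside $\{(1,2),(1,3)\}$.

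The main obstacle is the bookkeeping in this last step: one has to track precisely which twisted bundles $\bigwedge^j\tpn\otimes\Omega^1_{\pn}$ can contribute intermediate cohomology and confirm that an extra syzygy --- equivalently an extra distribution --- surfaces only at $(1,2)$ and $(1,3)$. The pair $(1,2)$ is exactly where the surface computation returns $h^0=2$ (the classical non-uniqueness for three points in general position), and $(1,3)$ is isolated by the syzygy count of Theorem \ref{resolution}; once these two low cases are excised, the Bott vanishing is uniform and the syzygy space is one-dimensional, which is the content of the corollary.
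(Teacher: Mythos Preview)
Your proposal is correct and follows essentially the same approach as the paper: the case $n=3$ is read off directly from Theorem \ref{resolution}, and the remaining cases are obtained from the Koszul/Bott cohomology chase underlying \cite[Theorem~1.4]{CoA}, which the paper invokes by remarking that ``their argument works in more generality''. Your write-up actually unpacks that chase (tensoring the Koszul resolution with $\Omega^1_{\pn}(d+2)$ and using simplicity of $\Omega^1_{\pn}$ for the $j=1$ term) more explicitly than the paper does; the paper simply declares the corollary as a consequence of ``the previous discussion and the Theorem'' without spelling out the Bott bookkeeping you identify as the only remaining obstacle.
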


Let $\dd^{gen}(\pn)$ denote the moduli space of isomorphism classes of generic distributions on $\pn$, and set
$$ \phi(d,n) := \displaystyle\int_{\mathbb{P}^n}c_n(\Omega_{\mathbb{P}^n}^1(d+2))
= \frac{(d+1)^{n+1} - (-1)^{n+1}}{d+2}. $$
The morphism of Theorem \ref{sing:map} then becomes
$$ \Sigma \colon \dd^{gen}(\pn) \longrightarrow {\rm Hilb}^{\phi(d,n)}(\mathbb{P}^n) $$
Corollary \ref{cor:generic} implies that this morphism is injective when $(d,n) \notin \{(1,2),(1,3)\}$. For these two exceptional cases, we have that $\Sigma$ is dominant. Indeed, note that $\phi(1,2)=3$ and $\phi(1,3)=5$. In both cases the reduced schemes in linear general position are all projectively equivalent. Since they are dense in the respective Hilbert schemes, we just need to show one example for each. Then consider the distribution given, in homogeneous coordinates $(x_0: \dots:x_n)$, by 
\[
\omega = \sum_{j=0}^n (x_{j+1}x_j -x_{j-1}^2) dx_j
\]
where the indices are taken modulo $n+1$. 
One may check that the singular scheme is supported on the points $(\lambda^{a_0} : \dots : \lambda^{a_n})$, where $a_j = \frac{1- (-2)^j}{3}$ and $\lambda$ is a root of unity of order $\phi(1,n)$. Hence the singular scheme must be reduced.

On the other hand, Lemma \ref{lem:generic} implies that $\Sigma\colon \dd^{gen}(\pn) \rightarrow {\rm Hilb}^{\phi(d,n)}(\mathbb{P}^n)$ is not surjective, for any $(d,n)$. Precisely determining the image of $\Sigma$ seems to be a very difficult problem. Let $Z$ be a zero-dimensional subscheme of $\mathbb{P}^3$ of length $5$. In order to be the singular scheme of a degree one distribution, it is necessary that $Z$ must be nonplanar, arithmetically Gorenstein and locally complete intersection, see the proof of Theorem \ref{resolution} and \cite[p.119]{SCH}. We wonder if these conditions are also sufficient. 

This concludes our discussion for the generic distributions - corresponding to the first line of Table \ref{table deg 1 v2}. In the next section we analyze the other three cases.


\section{Distributions of Degree 1}\label{sec:deg1}

In this section, we apply Theorem \ref{sing:map} and Proposition \ref{psyz} to provide a description of the irreducible components of the moduli space of codimension one distributions of degree $1$ on $\p3$. In this framework, we can explicitly describe the image of the morphism $\dd^{P}(\mathbb{P}^3)\rightarrow{\rm Hilb}^{Q}(\mathbb{P}^3)$ and compute its fibers. 

In order to proceed with our intentions, let us recall the classification of codimension one distributions $\ff$ of degree 1 on $\p3$ obtained in \cite[Section 7]{CCJ}. Such distributions are given by exact sequences of the following form
\begin{equation}\label{distdeg1}
0\longrightarrow T_\sF \longrightarrow \tp3 \longrightarrow \iz(3) \longrightarrow 0 \end{equation}
and the classification was given in terms of the Chern classes of the tangent sheaf $T_\ff$. It was shown that $T_\ff$ either splits as a sum of line bundles or is $\mu$-stable; the relevant information is summarized in Table \ref{table deg 1 v2}.



\subsection{Twisted Cubic} 

Let $\dd(0,0)$ be the scheme parameterizing isomorphism classes of codimension one distributions $\ff$ of degree 1 on $\pp^3$ such that $(c_2(T_\sF),c_3(T_\sF))=(0,0)$; according to the classification outlined in Table \ref{table deg 1 v2}, we have that $T_\ff=\op3\oplus\op3(1)$. It follows from \cite[Corollary 8.9]{CCJ} that $C:={\rm Sing}(\ff)$ is an arithmetically Cohen-Macaulay curve (aCM, for short) of degree 3 and genus 0, that is, $C$ is a, possibly degenerate, twisted cubic.

Let $H$ be the irreducible component of ${\rm Hilb}^{3t+1}(\mathbb{P}^3)$ that contains twisted cubics. There exists a map $f \colon H \rightarrow \mathbb{G}(3, H^0(\mathbb{P}^3, \mathcal{O}_{\mathbb{P}^3}(2)))$ sending a curve to the net of quadrics that defines it. The image is the space of determinantal nets and $f$ is an isomorphism on the aCM open subset, see \cite{EPS:Net,IX:twc}. On the other hand, one can easily list the possible irreducible components of an aCM degree three curve. The nonreduced ones have support consisting on one line or the union of two lines; such multiple structures are described in \cite{No:deg3}. It follows that aCM degree three curves defined by a determinantal net of quadrics are projectively equivalent to one of the following:

\begin{enumerate}
	\item $I = (xz-y^2, xw-yz, yw-z^2)$ a twisted cubic;
	\item $I = (xz-y^2,xw,yw)$ a conic meeting a noncoplanar line;
	\item $I = (xw,xy,yz)$ three noncoplanar lines meeting twice;
	\item $I = (xy,xz,yz)$ three noncoplanar concurrent lines;
	\item $I = (x^2,xz, yz)$ a planar double line meeting a line not in its plane;
	\item $I = (x^2,xw, xz-yw)$ a double line of genus -1 meeting another line;
	\item $I = (x^2-yz, xz,z^2)$ a triple line in a quadratic cone;
	\item $I = (x^2,xy,y^2)$ a infinitesimal neighborhood of a line.
\end{enumerate}

Next, we use the results in the previous section to describe $\dd(0,0)$ in terms of the morphism $\Sigma\colon \dd(0,0)\rightarrow H$ and compute its dimension. For each ideal $I$ listed above, we compute a generic $1$-form vanishing at $V(I)$, showing that the image of the morphism $\Sigma\colon\dd(0,0)\rightarrow H$ is precisely the open subset $U\subset H$ consisting of non planar aCM curves apart from triple lines in case (viii). We then find that the space of linear first syzygies of the ideals in the list above have, in all cases, dimension equal to 2, thus, according to Proposition \ref{fibra} and Theorem \ref{psyz}, the fibers of $\Sigma$ are open subsets of $\mathbb{P}^1$. We infer that they are of the form $D_+(h)$ for some homogeneous polynomial $h\in\com[t_0,t_1]$. As a consequence of the theorem on the dimension of the fibers, we conclude that $\dd(0,0)$ is irreducible and
$$ \dim \dd(0,0) = \dim H + 1 = 13. $$
Moreover, one can apply Proposition \ref{smooth} to check that $\dd(0,0)$ is smooth, since 
$$ \Ext^1(T_\ff,\iz(3)) = H^1(\iz(3)) \oplus H^1(\iz(2)) = 0. $$

Using the procedure outlined in Remark \ref{algo}, we can find explicit expressions for 1-forms vanishing at each of the possible ideals listed above. In addition, since the tangent sheaf is $\op3\oplus\op3(1)$, there exists a constant vector field that is tangent to each distribution. All of this information (generic 1-form vanishing at $V(I)$, vector field and polynomial $h$) is listed in Table \ref{twistedcubic}.

\begin{table}[ht]
\makegapedcells
\begin{tabular}{|c | c | c | c |} \hline
 Case & Generic $1$-form & Vector field & $h$ \\ \hline\hline
(i) & \makecell[l]{$\omega = t_1(z^2-yw)dx + (t_0(z^2-yw) - t_1(yz-xw))dy + $\\ $\quad + (t_1(y^2-xz) -t_0(yz-xw))dz + t_0(y^2-xz)dw$ } & \makecell[l]{$ t_0^3\partial_x + t_0^2t_1 \partial_y +$ \\$ \quad + t_0t_1^2 \partial_z+ t_1^3 \partial_w$ } &1 \\ \hline
(ii) & \makecell[l]{$\omega = -t_0ywdx + (t_1yw+ t_0xw)dy + t_1xwdz $\\ $\quad - t_1(y^2-xz)dw$ } & \makecell[l]{$ t_1^2\partial_x + t_0t_1 \partial y + $ \\$ \quad + t_0^2 \partial z$ } & $t_1$ \\ \hline
(iii) & \makecell[l]{$\omega = t_1yzdx - t_0xwdy - t_1xydz +t_0xydw$ } & \makecell[l]{$ t_0\partial_z + t_1 \partial_w $ } & $t_0t_1$ \\ \hline
(iv) & \makecell[l]{$\omega = t_0yzdx + t_1xzdy - (t_0+t_1)xydz$ } & \makecell[l]{$ \partial_w $ } & $t_0t_1(t_0+t_1)$\\ \hline
(v) & \makecell[l]{$\omega = (t_0yz+ t_1xz)dx - t_0xzdy - t_1x^2dz$ } & \makecell[l]{$ \partial_w $ } & $t_0t_1$ \\ \hline
(vi) & \makecell[l]{$\omega = (t_0xw+ t_1(yw-xz))dx - t_1xwdy + t_1x^2dz +$\\ $ \quad -t_0x^2dw$ } & \makecell[l]{$ t_0\partial_z + t_1 \partial_w$} & $t_1$ \\ \hline
(vii) & \makecell[l]{$\omega = (t_0z^2+ t_1xz)dx - t_1z^2dy - (t_0xz +t_1(x^2-yz))dz$ } & \makecell[l]{$ \partial_w $ } & $t_1$ \\ \hline
\end{tabular}
\medskip
\caption{Classification of distributions with singular scheme in ${\rm Hilb}^{3t+1}(\mathbb{P}^3)$.}
 \label{twistedcubic}
\end{table}

The case (viii) does not figure in Table \ref{twistedcubic} because a $1$-form singular at such scheme must be singular in codimension one. Indeed, the generic $1$-form is $\omega = (t_0y+t_1x)(ydx-xdy)$. We also note that the integrable distributions, i.e. foliations, that arise in this case must be of linear pullback type $\mathcal{LPB}(1)$; this follows from Jouanolou's classification \cite{Jou}.  In particular, the singular scheme must be a cone over $3$ points in a plane, these are precisely the cases (iv), (v) and (vii).
Our conclusions can therefore be summarized in the following statement.

\begin{teo}\label{cubic}
Let $\dd(0,0)$ be the scheme parameterizing distributions (up to isomorphism) on $\pp^3$ of codimension one and degree $1$ whose tangent sheaf is locally free. Let also $H$ be the irreducible component of ${\rm Hilb}^{3t+1}(\pp^3)$ containing twisted cubics. Then:
\begin{itemize}
\item[(i)] There exists a surjective morphism $\Sigma \colon\dd(0,0)\rightarrow U$, where $U\subset H$ is the open set consisting of non planar aCM curves in $H$ which are not of type (viii), which assigns each $\ff\in\dd(0,0)$ to its singular scheme, and such that each fiber $\Sigma^{-1}(C)$ is naturally isomorphic to an open set of $\pum$. 
\item[(ii)] $\dd(0,0)$ is an irreducible, smooth quasi-projective variety of dimension 13.
\item[(iii)] There exists a map $v\colon\dd(0,0)\rightarrow \mathbb{P}^3$ that assigns to each $\ff\in\dd(0.0)$ a constant vector field $v(\ff)\in\pp^3$ which is tangent to $\ff$. Moreover, for any $C\in U$, the closure of the image $\overline{v(\Sigma^{-1}(C))}$ describes a rational normal curve of degree $d$ in some $\pp^d\subset\pp^3$, where: 
\begin{itemize}
\item[(a)] $d=3$ if and only if $C$ is a twisted cubic.
\item[(b)] $d=2$ if and only if $C$ is a conic meeting a noncoplanar line;
\item[(c)] $d=1$ if and only if $C$ is three noncoplanar lines meeting twice;
\item[(d)] $d=0$ if and only if $C$ is three noncoplanar concurrent lines; 
\end{itemize}
\item[(iv)] $\ff$ is integrable if and only if ${\rm Sing}(\ff)$ falls in cases (iv), (v) or (vi). In this case $\ff$ is a linear pullback foliation. 
\end{itemize}
\end{teo}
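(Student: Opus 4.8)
The plan is to build the entire argument around the morphism $\Sigma\colon\dd(0,0)\to H$ together with the explicit data collected in Table \ref{twistedcubic}. Because the tangent sheaf splits as $\op3\oplus\op3(1)$, \cite[Corollary 8.9]{CCJ} identifies $C:=\sing(\ff)$ with an aCM curve of degree $3$ and genus $0$, and the net-of-quadrics map $f$ to $\mathrm{Grass}_3(H^0(\op3(2)))$ (an isomorphism on the aCM locus, see \cite{EPS:Net,IX:twc}) shows that such curves are exactly those cut out by determinantal nets, hence fall into the eight projective normal forms $(1)$–$(8)$. To prove (i) I would first check, directly from the first column of Table \ref{twistedcubic}, that each listed $1$-form is singular precisely along $V(I)$ in cases $(1)$–$(7)$, so that these curves lie in the image of $\Sigma$; since the whole construction is $\mathrm{PGL}_4$-equivariant and the normal forms are orbit representatives, the entire $\mathrm{PGL}_4$-orbit of each is realised and together they sweep out $U$. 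Case $(8)$ is excluded by observing that its generic $1$-form $\omega=(t_0y+t_1x)(y\,dx-x\,dy)$ is divisible, hence singular in codimension one, so it is not a point of $\dd(0,0)$; this pins the image as exactly $U$.

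For the fibre statement I would invoke Proposition \ref{fibra} to realise $\Sigma^{-1}(C)$ as a Zariski-open subset of $\mathbb{P}(H^0(\Omega^1_{\p3}(3)\otimes\iz))$, and then Proposition \ref{psyz}, which applies because every $C\in U$ is non-planar so that $h^0(\iz(1))=0$, to identify this space with the linear first syzygies of $I(C)$. A case-by-case syzygy computation — encoded in Table \ref{twistedcubic} through the parameters $t_0,t_1$ and the polynomial $h$ — shows that this syzygy space is two-dimensional in every case, so each fibre is the open subset $D_+(h)\subset\p1$ on which $\omega$ remains singular in codimension two. This completes (i).

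Part (ii) then follows formally. Smoothness is checked pointwise via Proposition \ref{smooth}, since $\Ext^1(T_\ff,\iz(3))=H^1(\iz(3))\oplus H^1(\iz(2))=0$ for the aCM curve $C$. For irreducibility and the dimension count I would use that the two-dimensional syzygy spaces assemble (by constancy of $h^0$ and cohomology-and-base-change) into a rank-$2$ bundle over the irreducible base $U\subset H$, exhibiting $\dd(0,0)$ as an open subset of the associated $\p1$-bundle; hence it is irreducible and $\dim\dd(0,0)=\dim H+1=13$.

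Finally, for (iii) the splitting $T_\ff=\op3\oplus\op3(1)$ produces a constant vector field as the image of the $\op3$-summand, defining the morphism $v\colon\dd(0,0)\to\p3$; restricting to a fibre $\Sigma^{-1}(C)\cong D_+(h)\subset\p1$ and reading the vector-field column of Table \ref{twistedcubic}, the assignment $(t_0:t_1)\mapsto(c_0:c_1:c_2:c_3)$ is a parametrised rational curve whose degree $d$ and linear span $\p{d}$ I would read off case by case, yielding (a)–(d). For (iv) I would compute $\omega\wedge d\omega$ for the generic form of each case, find that it vanishes exactly in cases $(4)$, $(5)$, $(7)$, and then exhibit, for each of these, the linear projection $\p3\dashrightarrow\p2$ realising $\ff$ as a pullback, confirming that it is of type $\mathcal{LPB}(1)$. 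The main obstacle I anticipate is in (i): rigorously pinning the image of $\Sigma$ to be exactly $U$ requires controlling the full Hilbert-scheme component $H$, justifying that the determinantal-net normal forms exhaust all non-planar aCM curves up to $\mathrm{PGL}_4$, and carefully excluding type $(8)$; once the image and the two-dimensional syzygy spaces are secured, the remaining parts are essentially bookkeeping on top of the explicit table.
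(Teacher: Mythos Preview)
Your proposal is correct and follows essentially the same route as the paper: classify the non-planar aCM curves via the net-of-quadrics map, use the explicit syzygy/$1$-form table to determine image and fibres of $\Sigma$, deduce smoothness from $\Ext^1(T_\ff,\iz(3))=H^1(\iz(3))\oplus H^1(\iz(2))=0$, and read off (iii)--(iv) from the table. The only cosmetic difference is that for irreducibility the paper simply invokes the theorem on the dimension of fibres (surjection onto irreducible $U$ with all fibres irreducible of constant dimension), whereas you spell out a slightly stronger statement via base-change to a $\p1$-bundle; both yield the same conclusion.
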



\subsection{Line and Two Points}
The Hilbert scheme ${\rm Hilb}^{t+3}(\mathbb{P}^3)$ of configurations of a line and two points is an irreducible variety of dimension $10$, see \cite[Example 4.5]{CN:Det}. The possible configurations arise 
by simply adding two points to a line $X$. The resulting scheme is always a flat limit of $X$ plus two isolated points, \cite[Theorem 1.4]{CN:Det}. Since we are concerned with nonplanar schemes $V(I)\in {\rm Hilb}^{t+3}(\mathbb{P}^3)$, we may consider, up to projective equivalence, the ideals $I$ listed below:

\begin{enumerate}
 \item $I = (x,y) \cap (x,z,w) \cap (y,z,w) = ( yw, xw, yz, xz ,xy)$, a line and two points;
 \item $I = (x,y) \cap (x,y^2,z) \cap (y,z,w) = ( xw, yz, xz, y^2, xy)$, a line and two points but one embedded;
 \item $I = (x,y) \cap (x,y^2,z) \cap (x^2, y,w) = ( xw, yz, x^2 , y^2, xy)$, a line with two simple embedded points;
 \item $I =(x,y) \cap (x^2,z,w)= (yw, xw, yz, xz, x^2)$, a line and a double point;
 \item $I = (x,y)\cdot(x,y,z) = (x^2 , xy, xz, y^2 , yz) $, a line with embedded double point.
\end{enumerate}

The corresponding ideal sheaves all have the same type of resolution:
\begin{equation}\label{res line 2pt}
0 \longrightarrow \mathcal{O}_{\mathbb{P}^3}(-4)^{\oplus 2} \longrightarrow \mathcal{O}_{\mathbb{P}^3}(-3)^{\oplus 6} \longrightarrow \mathcal{O}_{\mathbb{P}^3}(-2)^{\oplus 5} \longrightarrow \mathcal{I}_Z \longrightarrow 0
\end{equation}
In particular, one can check that $h^i(\iz(k))=0$ for $i>0$ and $k>0$.

Now we analyze each of the cases raised above following the same procedure outlined in the previous subsection. Generic 1-forms vanishing at $V(I)$ for the ideals in items (i) through (v) are obtained following the algorithm given in Remark \ref{algo}; however, every 1-form vanishing at a scheme $Z=V(I)$ for an ideal $I$ of type (v) also vanishes along three concurrent lines meeting at the embedded double point, implying that the corresponding morphism $\tp3\rightarrow\iz(3)$ cannot be surjective; the explicit expression is in display \eqref{type5} below. It thus follows that the image of the morphism $\Sigma\colon\dd(1,1)\rightarrow{\rm Hilb}^{t+3}(\mathbb{P}^3)$ is the open subset $U\subset{\rm Hilb}^{t+3}(\mathbb{P}^3)$ consisting of nonplanar schemes without embedded double point (that is, the complement of the families of schemes of type (v) and the planar ones). 

Furthermore, we also compute the dimension of the fibres of $\Sigma$ using Proposition \ref{psyz}; it turns out that the $h^0(\Omega^1_{\p3}\otimes\iz(3))=6$ for each possible scheme $Z$ from the list above. Finally, the open subsets giving the fibers of the morphism $\Sigma$ are of the form $D_+(h)$, for some homogeneous polynomial $h\in\com[t_0,\dots,t_5]$. We list these polynomials in Table \ref{table:line+2pt}, together with the generic 1-form of each case.

\begin{table}[ht] 
\makegapedcells
\begin{tabular}{|c | c |c| } \hline
 Case & Generic $1$-form & $h$ \\ \hline\hline
(i) & \makecell[l]{ $\omega = (t_2yw+t_4yz)dx + (t_3xw+t_5xz)dy +$ \\ $\quad \,+(t_0yw+t_1xw-(t_4+t_5)xy)dz+$ \\$ \quad \,-(t_0yz+t_1xz + (t_2+t_3)xy)dw $ } & $t_0t_1(t_3t_4-t_2t_5)$ \\ \hline
(ii) & \makecell[l]{ $\omega = (t_3yz-t_5y^2)dx + (t_1yz +t_2xw +t_4xz+t_5xy)dy +$ \\ $\quad \,+(t_0xw-t_1y^2-(t_3+t_4)xy)dz -(t_0xz + t_2xy)dw $ } & $t_0t_3(t_0t_5-t_2t_3)$ \\ \hline
(iii) & \makecell[l]{ $\omega = (t_2yz+t_3y^2+t_4xw+t_5xy)dx +$ \\ $\quad \,+ (t_0yz+t_1xw-t_3xy-t_5x^2)dy +$ \\ $\quad \,-(t_0y^2+t_2xy)dz -(t_1xy + t_4x^2)dw $ } & $t_0t_4(t_1t_2-t_0t_4)$ \\ \hline
(iv) & \makecell[l]{ $\omega = (t_0 yw +t_3yz + t_4xw +t_5xz)dx - (t_0xw + t_3xz)dy +$ \\ $\quad \,+(t_1yw+t_2xw-t_5x^2)dz -(t_1yz+ t_2xz+t_4x^2)dw $ } & $t_1(t_4t_3-t_0t_5)$ \\ \hline
\end{tabular}
\caption{Classification of distributions with singular scheme in ${\rm Hilb}^{t+3}(\mathbb{P}^3)$.}
 \label{table:line+2pt}
\end{table}

We remark that the generic $1$-form in case (v) is 
\begin{equation}\label{type5}
\omega = (t_1xz + t_3y^2 +t_4xz+t_5xy)dx +(t_0yz+t_2xz)dy-(t_0y^2 + (t_1+t_2)xy+t_4x^2)dz.
\end{equation}
Since $\omega$ does not depend on $w$, it defines a family of linear pullback foliations. Each general member is singular at three lines crossing at the double point.

\begin{teo}\label{line+2pt}
The scheme $\dd(2,2)$ parameterizing isomorphism classes of codimension one distributions $\ff$ of degree 1 on $\pp^3$ such that $(c_2(T_\ff),c_3(T_\ff))=(2,2)$, is an irreducible smooth quasi-projective variety of dimension 15. The image of the morphism $\Sigma\colon\dd(2,2)\rightarrow{\rm Hilb}^{t+3}(\mathbb{P}^3)$ is the open subset consisting of nonplanar schemes without an embedded double point, and its fibers are open subsets of $\pp^5$.
\end{teo}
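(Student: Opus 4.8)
The plan is to study the morphism $\Sigma\colon\dd(2,2)\to{\rm Hilb}^{t+3}(\p3)$ of Theorem \ref{sing:map}, reading off the image, the fibers, the irreducibility and the dimension from it, and then handling smoothness by a separate tangent-space computation, since Proposition \ref{smooth} does not apply directly here because $\Ext^1(T_\ff,N_\ff)$ need not vanish. To identify the image, I would use that every $\ff\in\dd(2,2)$ sits in \eqref{distdeg1} with normal sheaf $\iz(3)$ and, by the classification, with $Z=\sing(\ff)$ a nonplanar line-and-two-points scheme. Running through the projective-equivalence types (1)--(5): the explicit generic $1$-forms of Table \ref{table:line+2pt} realize types (1)--(4) by honest distributions whose singular scheme is exactly $Z$, whereas for type (5) the generic form \eqref{type5} is independent of $w$, hence singular along a surface, so it fails to define a distribution. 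Thus the image is the open complement $U$ of the type-(5) and planar loci, i.e. the nonplanar schemes without embedded double point.

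For the fibers, the computation $h^0(\Omega^1_{\p3}(3)\otimes\iz)=6$ for every $Z\in U$, together with Proposition \ref{fibra}, shows each fiber $\Sigma^{-1}(Z)$ is a nonempty Zariski-open subset of $\mathbb{P}(H^0(\Omega^1_{\p3}(3)\otimes\iz))\cong\p5$; Proposition \ref{psyz} interprets this $\p5$ as the space of linear first syzygies of $I(Z)$, and the polynomials $h$ of Table \ref{table:line+2pt} give these open sets as $D_+(h)$. Since this $h^0$ is constant over $U$, cohomology and base change make $\pi_*(\Omega^1_{\p3}(3)\otimes\mathcal{I}_{\mathcal{Z}})$ locally free of rank $6$ over $U$ (for $\mathcal{Z}$ the universal family), so that $\dd(2,2)$ is identified with the open subscheme of the $\p5$-bundle $\mathbb{P}\big(\pi_*(\Omega^1_{\p3}(3)\otimes\mathcal{I}_{\mathcal{Z}})\big)$ cut out by the open condition of Proposition \ref{fibra} that the tautological $1$-form be singular exactly along $\mathcal{Z}$. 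As $U$ is irreducible of dimension $10$, this bundle is irreducible of dimension $15$, whence $\dd(2,2)$ is irreducible of dimension $15$.

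It remains to prove smoothness. Since $\dd(2,2)$ is an open subscheme of a Quot scheme, its Zariski tangent space at $[\ff]$ is $\Hom(T_\ff,N_\ff)=\Hom(T_\ff,\iz(3))$, and smoothness at $[\ff]$ follows once this space is shown to have dimension exactly $15$, matching the dimension of the variety. The strategy is to compute $\dim\Hom(T_\ff,\iz(3))$ for every $\ff\in\dd(2,2)$ by applying $\Hom(T_\ff,-)$ to \eqref{distdeg1} and exploiting the resolution \eqref{res line 2pt} of $\iz$, thereby reducing the problem to the $\Ext$-groups of the $\mu$-stable rank-two reflexive tangent sheaf $T_\ff$. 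This final cohomological step for the nonsplit stable sheaf $T_\ff$ is the main obstacle; it is precisely where the technical lemmata of Appendix \ref{Apdx} enter, and once $\dim\Hom(T_\ff,\iz(3))=15$ is established uniformly, the tangent space matches the dimension at every point and $\dd(2,2)$ is smooth.
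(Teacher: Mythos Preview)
Your proposal is correct and follows essentially the same route as the paper: the type-by-type analysis of nonplanar schemes to determine the image of $\Sigma$, the uniform computation $h^0(\Omega^1_{\p3}(3)\otimes\iz)=6$ together with Propositions \ref{fibra} and \ref{psyz} for the fibers, the fibration over the irreducible $10$-dimensional base $U$ for irreducibility and dimension, and finally smoothness via $\dim\Hom(T_\ff,\iz(3))=15$ using the lemmata of Appendix \ref{Apdx}. Your $\p5$-bundle formulation of the irreducibility step is a slightly more explicit packaging of what the paper obtains from the theorem on the dimension of the fibers, but the content is the same.

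One correction worth flagging: for type (5) the generic form \eqref{type5} is \emph{not} singular along a surface. Being independent of $w$ makes it a linear pullback from $\p2$, so its singular locus in $\p3$ is the union of lines through $[0{:}0{:}0{:}1]$ lying over the singularities of the underlying plane foliation, generically three concurrent lines. Your conclusion that type (5) is excluded from the image is still correct, since this singular scheme strictly contains $Z$ and hence $T\p3\to\iz(3)$ is not surjective; but the obstruction is an enlarged one-dimensional singular locus, not a codimension-one one.
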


The second part of the previous theorem was proved above, and it implies, by the theorem of the dimension of the fibres, that $\dd(2,2)$ is an irreducible quasi-projective variety of dimension 15. In order to show that $\dd(2,2)$ is smooth, we argue, using Lemmata \ref{tec-lema} and \ref{lem:dimext}, that $\dim T_{[\sF]}\dd(2,2)=15$ for every $\ff\in\dd(2,2)$. Indeed, note that $T_\sF^{\vee}$ is a rank 2 stable reflexive sheaf with Chern classes $(c_1(T_\sF^{\vee}),c_2(T_\sF^{\vee}),c_3(T_\sF^{\vee}))=(-1,2,2)$ so we can see from \cite[Table 2.6.1]{Chang} that 
\[
h^1(T_\sF^{\vee}(k))=0 \text{ for } k \neq 0, \quad h^2(T_\sF^{\vee}(k))=0 \text{ for } k\geq -1 \, \text{ and } \, h^3(T_\sF^{\vee}(k))=0 \text{ for } k\geq -4.
\]
We can then apply the functor $\ponto\otimes T_\sF^{\vee}(3)$ to the resolution in display \eqref{res line 2pt}, keeping in mind item (2) of Lemma \ref{tec-lema}, to conclude that $h^i(T_\sF^{\vee}\otimes\iz(3))=0$ for $i>0$, thus $\Ext^i(T_\sF,\iz(3))=0$ for $i>1$ and
$$ \Ext^1(T_\sF,\iz(3)) \simeq H^0(\inext^1(T_\sF,\op3)\otimes\iz(3)) \simeq H^0(\inext^3(\mathcal{O}_Z,\op3)\otimes\iz), $$
with the last isomorphism coming from the sequence that defines the distribution, namely \eqref{distdeg1}. The expression in display \eqref{dim-hom} gives, in the case at hand,
$$ \dim\Hom(T_\sF,\iz(3)) = \chi(T_\sF,\iz(3)) + h^0(\inext^3(\mathcal{O}_Z,\op3)\otimes\iz) = 9+6 = 15, $$
with the two quantities in the middle being computed using \eqref{chi(ef)} and Lemma \ref{lem:dimext}. 

To compute $ \chi(T_\sF,\iz(3)) $ we need to compute ${\rm ch}(T_\sF)^\vee$ and ${\rm ch}(\iz(3))$. Since we have $c_1(T_\sF) = 1$, $c_2(T_\sF) = c_3(T_\sF) = 2$, $c_1(\iz(3)) = 3$, $c_2(\iz(3)) =1$ and $c_3(\iz(3)) = -5$,
it follows that ${\rm ch}(T_\sF)^\vee = \left(2,-1, -\frac{3}{2}, -\frac{1}{6} \right)$ and ${\rm ch}(\iz(3)) = \left(1,3,\frac{7}{2}, \frac{1}{2}\right)$. Therefore $\chi(T_\sF,\iz(3)) = 9$.

Finally we compute $h^0(\inext^3(\mathcal{O}_Z,\op3)\otimes\iz)= {\rm length }\, \inext^3(\mathcal{O}_Z,\op3)\otimes\iz $. We know that $Z$ is composed by a line $L$ and two points $p_1$ and $p_2$. Table \ref{table:line+2pt} shows that either: $p_1 , p_2 \not \in L$ are isolated points; $p_1 \notin L $ and $p_2 \in L$ is a simple embedded point; $p_1, p_2 \in L$ are simple embedded points; or $p_1=p_2 \notin L$ is a double point. In any case, Lemma \ref{lem:dimext} yields $h^0(\inext^3(\mathcal{O}_Z,\op3)\otimes\iz) = 6$.


\subsection{Conic and One Point}
Recall that the Hilbert Scheme ${\rm Hilb}^{2t+2}(\mathbb{P}^3)$ consists of two irreducible components: one component contains two skew lines, while the other contains non pure schemes consisting of a conic plus a point; let $H$ denote this second component. One can show that $H$ is a smooth irreducible variety of dimension $11$, see \cite[Example 4.6(b)]{CN:Det}. We will only consider the configurations where the point (embedded or not) does not lie on the same plane as the conic. Then, up to projective equivalence, they are listed below:
\begin{enumerate}
 \item $I = (xy-z^2,w)\cap (x,y,z)= (zw, yw, xw, xy - z^2 )$, a conic and a point;
 \item $I = (xy,w)\cap (x,y,z) = (zw, yw, xw, xy)$, a pair of concurrent lines and a point;
 \item $I = (z^2,w)\cap (x,y,z) = (zw, yw, xw, z^2 )$, a planar double line and a point;
 \item $I = (xy-z^2,w)\cap (y,z,w^2) = (w^2 , zw, yw, xy - z^2 )$, a conic and an embedded point;
 \item $I = (xy,w)\cap (x,z, w^2) = (w^2 , zw, xw, xy)$, a pair of concurrent lines and an embedded point in the smooth locus;
 \item $I = (xy,w)\cap (x,y,w^2) = (w^2 , yw, xw, xy)$, a pair of concurrent lines and an embedded point in the singular locus;
 \item $I = (z^2,w)\cap (y,z,w^2) = (w^2 , zw, yw, z^2 )$, a planar double line and an embedded point.
\end{enumerate}

The corresponding ideal sheaves have all the same resolution:
\begin{equation}\label{res conic pt}
0 \longrightarrow \mathcal{O}_{\mathbb{P}^3}(-4) \longrightarrow \mathcal{O}_{\mathbb{P}^3}(-3)^{\oplus 4} \longrightarrow \mathcal{O}_{\mathbb{P}^3}(-2)^{\oplus 4} \longrightarrow \mathcal{I}_Z \longrightarrow 0
\end{equation}
Again, one can check that $h^i(\iz(k))=0$ for $i>0$ and $k>0$.

Now we analyze each of the cases listed above following the procedure outlined in the previous subsections. Generic 1-forms vanishing at $V(I)$ for the ideals in items (i) through (vii) are obtained using the algorithm described in Remark \ref{algo}; we then find that every 1-form vanishing at a scheme $Z=V(I)$ for ideals $I$ of type (vi) or (vii) also vanishes along three concurrent lines, meaning that the corresponding morphism $\tp3\rightarrow\iz(3)$ cannot be surjective; the explicit expressions are in display \eqref{type67} below. It thus follows that the image of the morphism $\Sigma\colon\dd(2,2)\rightarrow H$ is the open subset $U\subset H$ consisting of nonplanar schemes which are not of type (vi) or (vii).

Finally, one can check that the dimension of the space of linear syzygies of the ideals of type (i) through (v) is equal to 4, thus the fibers of $\Sigma\colon\dd(2,2)\rightarrow H$ are open subsets of $\p3$ the form $D_+(h)$ for some homogeneous polynomial $h\in\com[t_0,t_1,t_2,t_3]$. We provide explicit expressions for the generic $1$-form and corresponding polynomials $h$ in Table \ref{table:conic+pt}.

\begin{table}[ht]
\makegapedcells
\begin{tabular}{|c | c | c| } \hline
 Case & Generic $1$-form & $h$ \\ \hline\hline
(i) & \makecell[l]{ $\omega = w(t_1z+t_2y)dx + w(t_0z+(t_3-t_2)x)dy+ $ \\ $\quad \, -w(t_0y+t_1x+t_3z)dz +t_3(z^2-xy)dw $ } & $t_3(t_0t_1+t_2^2-t_2t_3)$ \\ \hline
(ii) & \makecell[l]{ $\omega = w(t_1z+t_2y)dx + w(t_0z+t_3x)dy+ $ \\ $\quad \,-w(t_0y+t_1x)dz -(t_2+t_3)xy dw $ } & $t_0t_1(t_2+t_3)$ \\ \hline
(iii) & \makecell[l]{ $\omega = w(t_1z+t_2y)dx + w(t_0z-t_2x)dy+ $ \\ $\quad \, -w(t_0y+t_1x-t_3z)dz -t_3z^2dw $ } & $t_2t_3$ \\ \hline
(iv) & \makecell[l]{ $\omega = t_3wydx + w(t_1w+t_2z)dy+w(t_0w -t_2y-t_3z)dz+ $ \\ $\quad \, -(t_0zw+t_1yw+t_3(xy-z^2))dw $ } 
& $t_3(t_0t_2+t_1t_3)$ \\ \hline
(v) & \makecell[l]{ $\omega = w(t_1w+t_2z)dx + t_3xwdy+ $ \\ $\quad \, +w(t_0w-t_2x)dz -(t_0zw+t_1xw+t_3xy)dw $ } 
& $t_0t_2t_3$ \\ \hline
\end{tabular}
\caption{Classification of distributions with singular scheme in ${\rm Hilb}^{2t+2}(\mathbb{P}^3)$.}
 \label{table:conic+pt}
\end{table}

Note that cases (vi) and (vii) do not figure in the table. The generic $1$-forms are, respectively:
\begin{equation}\label{type67}
\begin{aligned}
\omega_6 &= w(t_1w+t_2y)dx + w(t_0w+t_3x)dy-(t_0yw+t_1xw+(t_2+t_3)xy)dw, \\
\omega_7 &= w(t_1w+t_2z)dy + w(t_0w-t_2y+t_3z)dz-(t_0zw+t_1yw+t_3z^2)dw.
\end{aligned}
\end{equation}
They depend on three variables, hence they define families of linear pullback foliations. Each general member of either family is singular at three concurrent lines, the third one passing through the embedded point. 

It is worth pointing out the integrable distributions, i.e. foliations, in this case. These are the rational foliations $\mathcal{R}(2,1)$ singular along a smooth conic and a point. We compute the Frobenius integrability condition $\omega \wedge d\omega =0$ in case (i) and it follows that the integrable distributions are those satisfying $t_0 = t_1 = t_3-2t_2=0$. 

\begin{teo}\label{conic+pt}
Let $\dd(1,1)$ be the scheme parameterizing isomorphism classes of codimension one distributions $\ff$ of degree 1 on $\pp^3$ such that $(c_2(T_\ff),c_3(T_\ff))=(1,1)$. Then the image of the morphism $\Sigma\colon\dd(1,1)\rightarrow H$ is the open subset consisting of non planar schemes in $H$ which are neither of type (vi) nor of type (vii) in the list above, and its fibers are open subsets of $\pp^3$. In addition:
\begin{enumerate}
\item $\dd(1,1)$ is an irreducible smooth quasi-projective variety of dimension 14.
\item $\dd(1,1)$ contains $\mathcal{R}(2,1)$ as a codimension three subset; the image of the restricted morphism $\Sigma\colon\mathcal{R}(2,1)\rightarrow H$ is the open subset consisting of the schemes of type (i) in the list above, and its fibres are open subsets of $\p1$.
\end{enumerate}
\end{teo}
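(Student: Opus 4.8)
The plan is to assemble the image-and-fiber description from the case analysis carried out just above the statement, and then to read off the global geometry of $\dd(1,1)$ from the morphism $\Sigma$ together with a pointwise tangent-space computation. For the first assertion, the preceding discussion already shows that a generic twisted $1$-form vanishing on a scheme of type (6) or (7) factors through three concurrent lines (see \eqref{type67}), so the induced map $\tp3\to\iz(3)$ fails to be surjective and such schemes cannot lie in the image; conversely, for types (1)--(5) the explicit forms of Table \ref{table:conic+pt} realize genuine distributions, which identifies the image with the open locus $U\subset H$ of nonplanar schemes that are neither of type (6) nor (7). By Proposition \ref{psyz} the space of degree $3$ twisted $1$-forms singular at such a $Z$ is isomorphic to the space of linear first syzygies of $I(Z)$, of dimension $4$ in each of these cases; combined with Proposition \ref{fibra} this presents the fiber $\Sigma^{-1}(Z)$ as a nonempty Zariski-open subset of $\pp^3$.

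Next I would deduce irreducibility and dimension by the theorem on the dimension of the fibers, exactly as for $\dd(2,2)$ in Theorem \ref{line+2pt}: the image $U$ is an open subvariety of the irreducible $11$-dimensional component $H$, every fiber is irreducible of dimension $3$, and the fiber dimension is constant, so $\dd(1,1)$ is irreducible of dimension $11+3=14$. For smoothness, the strategy mirrors the treatment of $\dd(2,2)$. Since $\dim T_{[\ff]}\dd(1,1)=\dim\Hom(T_\ff,\iz(3))$, it suffices to show this number equals $14$ at every point. Here $T_\ff^{\vee}$ is a $\mu$-stable rank $2$ reflexive sheaf with Chern classes $(-1,1,1)$, so Chang's tables \cite{Chang} supply the relevant intermediate-cohomology vanishings; applying the functor $\ponto\otimes T_\ff^{\vee}(3)$ to the resolution \eqref{res conic pt} and invoking Lemma \ref{tec-lema} gives $h^i(T_\ff^{\vee}\otimes\iz(3))=0$ for $i>0$. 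Consequently $\Ext^i(T_\ff,\iz(3))=0$ for $i>1$ and $\Ext^1(T_\ff,\iz(3))\simeq H^0(\inext^3(\mathcal{O}_Z,\op3)\otimes\iz)$ via \eqref{distdeg1}, and the identity \eqref{dim-hom}, the Chern-class formula \eqref{chi(ef)}, and Lemma \ref{lem:dimext} yield $\dim\Hom(T_\ff,\iz(3))=\chi(T_\ff,\iz(3))+h^0(\inext^3(\mathcal{O}_Z,\op3)\otimes\iz)=14$. (Alternatively, smoothness of $\dd(1,1)$ may simply be quoted from \cite[Proposition 11.2]{CCJ}.)

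For the statement about $\mathcal{R}(2,1)$, I would impose the Frobenius condition $\omega\wedge d\omega=0$ on the generic $1$-form of case (1) in Table \ref{table:conic+pt}; this cuts out precisely $t_0=t_1=0$, so the integrable members of each such fiber form the line $\{t_0=t_1=0\}\cong\p1$ intersected with the open fiber, that is, an open subset of $\p1$. Since the singular scheme of a foliation in $\mathcal{R}(2,1)$ is a smooth conic together with a point off its plane, the image of $\Sigma|_{\mathcal{R}(2,1)}$ is exactly the open locus of type (1) schemes, which is dense of dimension $11$ in $H$. Applying the fiber-dimension theorem once more gives $\dim\mathcal{R}(2,1)=11+1=12$, hence codimension $2$ in $\dd(1,1)$; irreducibility and generic smoothness of $\mathcal{R}(2,1)$ may be taken from \cite[Theorem 2.1]{CPV}.

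The main obstacle I anticipate is the uniform smoothness claim: one must verify $\dim\Hom(T_\ff,\iz(3))=14$ not only at the generic (type (1)) point but at every point of $\dd(1,1)$, including the degenerate configurations of types (2)--(5) with their various embedded or non-reduced points. This forces a case-by-case evaluation of the length of $\inext^3(\mathcal{O}_Z,\op3)\otimes\iz$ through Lemma \ref{lem:dimext}, and the delicate point is that these lengths, combined with the cohomological vanishings from \cite{Chang}, must conspire to give the same value $14$ in all cases, so that $\dim T_{[\ff]}\dd(1,1)$ matches the already-computed global dimension. The remaining steps --- the exclusion of types (6) and (7), the syzygy count, and the fiber-dimension bookkeeping --- are routine given the preparatory results.
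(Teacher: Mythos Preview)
Your proposal is correct and follows the paper's overall architecture: the image and fiber description via the case analysis and Proposition~\ref{psyz}/\ref{fibra}, irreducibility and dimension via the fiber-dimension theorem, and the identification of $\mathcal{R}(2,1)$ via the Frobenius condition $t_0=t_1=0$ in case~(1) are exactly what the paper does.

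The one substantive difference is in the smoothness computation. You propose to replicate the $\dd(2,2)$ argument verbatim: invoke Chang's tables for the intermediate cohomology of $T_\sF^\vee$, tensor the resolution~\eqref{res conic pt} of $\iz$ by $T_\sF^\vee(3)$, and then feed the result into \eqref{dim-hom} and~\eqref{chi(ef)} to get $\chi(T_\sF,\iz(3))+h^0(\inext^3(\mathcal{O}_Z,\op3)\otimes\iz)=11+3=14$. The paper instead exploits the very short resolution \eqref{sqc d11}, $0\to\op3(-1)\to\op3^{\oplus3}\to T_\sF\to 0$, in two ways: first applying $\Hom(\,\cdot\,,\iz(3))$ directly yields $\dim\Hom(T_\sF,\iz(3))=3h^0(\iz(3))-h^0(\iz(4))+\dim\Ext^1(T_\sF,\iz(3))=36-25+\dim\Ext^1$, bypassing the Hirzebruch--Riemann--Roch computation; second, tensoring \eqref{sqc d11} (rather than~\eqref{res conic pt}) with $\iz(2)$ gives the required vanishing $h^i(T_\sF^\vee\otimes\iz(3))=0$ for $i>0$ using only $h^i(\iz(k))=0$ for $i,k>0$, so Chang's tables are not needed at all. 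Both routes arrive at $\dim\Ext^1=3$ via Lemma~\ref{lem:dimext} and conclude identically. Your approach is perfectly valid and has the virtue of uniformity with the $\dd(2,2)$ case; the paper's is shorter because the resolution~\eqref{sqc d11} is so simple that it makes both the $\chi$ computation and the cohomology vanishing immediate.
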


It was already shown in \cite[Proposition 11.2]{CCJ} that $\dd(1,1)$ is smooth but here we give a different proof. Our strategy is to show that $\dim T_{[\sF]}\dd(1,1)=\dim\dd(1,1)$ for every $\ff\in\dd(1,1)$. As it was pointed out in \cite[proof of Theorem 8.5]{CCJ}, the tangent sheaf $T_\sF$ admits the following resolution
\begin{equation}\label{sqc d11}
0 \longrightarrow \op3(-1) \stackrel{\mu}{\longrightarrow} \op3^{\oplus3} \longrightarrow T_\sF \longrightarrow 0. 
\end{equation}
Applying the functor $\Hom(\ponto,\iz(3))$ to this sequence we get
$$ 0 \longrightarrow \Hom(T_\sF,\iz(3)) \longrightarrow H^0(\iz(3))^{\oplus3} \stackrel{\tilde{\mu}}{\longrightarrow} H^0(\iz(4)) \longrightarrow \Ext^1(T_\sF,\iz(3)) \longrightarrow 0 , $$
since $h^1(\iz(3))=0$. Therefore
$$ \dim \Hom(T_\sF,\iz(3)) = 3\, h^0(\iz(3)) - h^0(\iz(4)) + \dim \Ext^1(T_\sF,\iz(3)). $$
Since $h^0(\iz(3))=12$ and $h^0(\iz(4))=25$ we only need to show that $\dim \Ext^1(T_\sF,\iz(3))= 3$. We will use Lemmata \ref{tec-lema} and \ref{lem:dimext} to achieve that.

Note that $T_\sF^\vee=T_\sF(-1)$, since $c_1(T_\sF)=1$, thus $T_\sF^\vee\otimes\iz(3)=T_\sF\otimes\iz(2)$. Tensoring the exact sequence \eqref{sqc d11} with $\iz(2)$ and passing to cohomology, we get 
$$ H^i(\iz(2))^{\oplus 3} \longrightarrow H^i(T_\sF\otimes\iz(2)) \longrightarrow H^{i+1}(\iz(1)), $$
but we already observed right after the sequence \eqref{res conic pt} that $h^i(\iz(k))=0$ for $i>0$ and $k>0$, then $h^i(T_\sF\otimes\iz(2))=0$ for $i>0$. Item (v) in Lemma \ref{tec-lema} yields
$$ \dim\Ext^1(T_\sF,\iz(3)) = h^0(\inext^1(T_\sF, \op3) \otimes\iz(3)) = h^0(\inext^3(\mathcal{O}_Z, \op3) \otimes \iz) $$
and this is just the length of $\inext^3(\mathcal{O}_Z, \op3) \otimes \iz$ which will be computed with Lemma \ref{lem:dimext}.

We know that $Z$ is composed of a conic $C$ and a point $p$. If $p \not\in C$ then $\mathcal{I}_{Z,p}= \mathfrak{m}_p$ the maximal ideal at $p$. If $p\in C$, i.e., an embedded point for $Z$. From the table \ref{table:conic+pt} we know that we only need to treat the case where $C$ is reduced and $p$ is embedded in the regular part. In either case, Lemma \ref{lem:dimext} shows that the length is three hence $\dim\Ext^1(T_\sF,\iz(3)) = 3$.


\appendix
\section{Tensor-Hom Relation for Sheaves} \label{Apdx}
If $M$ and $N$ are modules over a commutative ring $R$, then there exists a canonical morphism of $R$-modules
\begin{align*}
 M^\vee \otimes N & \longrightarrow \Hom_R(M , N) \\
 u\otimes y & \longmapsto (x \mapsto u(x)\otimes y )
\end{align*} 
see \cite[Chapter II \S 4.2]{Bourbaki}. Now, given sheaves $F$ and $G$ on a variety $X$, the previous observation leads to the following morphism of sheaves
$$ \eta\colon F^\vee\otimes G \longrightarrow \inhom(F,G) $$
which we call \emph{Tensor-Hom relation}. The following technical lemma is useful in our proof of smoothness for $\dd(2,2)$ and $\dd(1,1)$.

\begin{lema}\label{tec-lema}
Let $X$ be a smooth projective threefold and let $F$ and $G$ be sheaves on $X$. If $F$ is reflexive and $G$ is torsion free then:
\begin{enumerate}
\item $\inext^1(F,G) \simeq \inext^1(F,\mathcal{O}_X)\otimes G$;
\item $\Tor_1(F,G)=0$;
\item The morphism $\eta$ is surjective and the induced cohomology map $H^i(F^\vee\otimes G)\rightarrow H^i(\inhom(F,G))$ is surjective for $i=0$ and an isomorphism for $i\ge 1$;
\item $\eta$ is an isomorphism if either $\sing(F)\cap\sing(G)=\emptyset$ or $G$ is reflexive;
\item if $H^i(F^\vee\otimes G)=0$ for $i>0$, then $\Ext^1(F,G)=H^0(\inext^1(F,\mathcal{O}_X)\otimes G)$ and $\Ext^i(F,G)=0$ for $i>1$.
\end{enumerate}
\end{lema}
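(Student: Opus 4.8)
The plan is to reduce every item to a single structural fact, which I would establish first: on a smooth threefold a reflexive sheaf $F$ is locally free outside a finite set, so $\sing F$ has codimension $3$, and at each closed point $F$ has projective dimension at most $1$. Concretely, the $S_2$ property of reflexive sheaves gives $\mathrm{depth}\,F_x\ge 2$ at every closed point, so Auslander--Buchsbaum yields a local free resolution $0\to E_1\xrightarrow{\phi}E_0\to F\to0$ with $E_0,E_1$ locally free. Equivalently, $\inext^q(F,\mathcal{O}_X)=0$ for $q\ge2$, while $\inext^1(F,\mathcal{O}_X)$ is supported on the finite set $\sing F$. Everything else will follow from this length-one resolution together with the $0$-dimensionality of $\sing F$.

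For (2) I would tensor the resolution with $G$: since $E_0$ is locally free, $\Tor_1(F,G)=\ker(E_1\otimes G\xrightarrow{\phi\otimes1}E_0\otimes G)$. Now $E_1\otimes G$ is torsion free ($G$ torsion free, $E_1$ locally free), hence so is any subsheaf; but $\phi\otimes1$ is an isomorphism off $\sing F$, so this kernel is supported on a finite set. A torsion-free sheaf with $0$-dimensional support vanishes, giving (2). For (1) I would dualize the resolution to $0\to F^\vee\to E_0^\vee\xrightarrow{\phi^\vee}E_1^\vee\to\inext^1(F,\mathcal{O}_X)\to0$ and tensor its tail with $G$, so that $\inext^1(F,\mathcal{O}_X)\otimes G=\coker(\phi^\vee\otimes1)$; on the other hand, applying $\inhom(-,G)$ to the resolution and using $\inhom(E_i,G)\cong E_i^\vee\otimes G$ and $\inext^1(E_0,G)=0$ identifies $\inext^1(F,G)$ with the same cokernel $\coker(\phi^\vee\otimes1)$. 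Both identifications being natural, the resulting isomorphism is canonical.

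For (3) and (4) the crucial remark is that $\eta$ is an isomorphism wherever $F$ is locally free, so $\ker\eta$ and $\coker\eta$ are both supported on the finite set $\sing F$. The cohomological part of (3) then follows formally: factoring $\eta$ through its image and using that $0$-dimensional sheaves have no higher cohomology, one sees that $H^i(F^\vee\otimes G)\to H^i(\inhom(F,G))$ is surjective for $i=0$ and an isomorphism for $i\ge2$, the case $i=1$ being governed by $H^0(\coker\eta)$. For the pointwise statements in (4): when $\sing F\cap\sing G=\varnothing$, $G$ is locally free at every point of $\sing F$, and for locally free $G$ the evaluation map $\eta$ is visibly an isomorphism, so $\eta$ is an isomorphism everywhere; when $G$ is reflexive I would try to exploit that a reflexive sheaf on a smooth threefold also has projective dimension $\le1$ (depth $\ge2$) in order to control the relevant higher $\Tor$ sheaves.

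This last point is exactly where I expect the main difficulty to lie. Unwinding the comparison above, $\coker\eta$ is naturally the local homology $\Tor_1\!\big(\inext^1(F,\mathcal{O}_X),G\big)$ and $\ker\eta$ is a subquotient of $\Tor_2\!\big(\inext^1(F,\mathcal{O}_X),G\big)$, both concentrated at the isolated points of $\sing F$. Thus the surjectivity of $\eta$, and the full isomorphism claims, reduce to the vanishing of these $\Tor$-sheaves, and this is the only step that is not a formal consequence of the resolution: it is genuinely where the hypotheses on $G$ (local freeness near $\sing F$, or reflexivity) must do all the work, so I would treat it as the technical heart of the argument and verify it case by case. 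Finally, (5) I would obtain from the local-to-global spectral sequence $E_2^{p,q}=H^p(\inext^q(F,G))\Rightarrow\Ext^{p+q}(F,G)$: by (1) and $\inext^{\ge2}(F,-)=0$ only the rows $q=0,1$ survive, and since $\inext^1(F,\mathcal{O}_X)\otimes G$ is $0$-dimensional we have $E_2^{p,1}=0$ for $p\ge1$ and $E_2^{0,1}=H^0(\inext^1(F,\mathcal{O}_X)\otimes G)$. The hypothesis $H^{>0}(F^\vee\otimes G)=0$ together with (3) forces $H^{>0}(\inhom(F,G))=0$, collapsing the sequence to $\Ext^1(F,G)=H^0(\inext^1(F,\mathcal{O}_X)\otimes G)$ and $\Ext^i(F,G)=0$ for $i>1$.
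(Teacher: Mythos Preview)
Your approach is the paper's: take a two-term locally free resolution $0\to L_1\to L_0\to F\to 0$, dualize, and compare the sequences obtained by applying $\ponto\otimes G$ versus $\inhom(\ponto,G)$. Items (1), (2) and (5) match the paper almost verbatim.

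The one substantive point is the surjectivity of $\eta$ in (3), which you rightly isolate as the crux. With $K=\img(L_0^\vee\to L_1^\vee)$, the paper identifies $\ker\eta\simeq\Tor_1(K,G)\simeq\Tor_2(\inext^1(F,\mathcal{O}_X),G)$ and then simply asserts that the image of $F^\vee\otimes G\to L_0^\vee\otimes G$ equals $\inhom(F,G)$, by labelling both the map $L_0^\vee\otimes G\to K\otimes G$ and the map $L_0^\vee\otimes G\to L_1^\vee\otimes G$ as ``$\mu^\vee\otimes\mathbf{1}_G$'' and equating their kernels. You have correctly seen that the discrepancy between these two kernels is exactly $\coker\eta\simeq\Tor_1(\inext^1(F,\mathcal{O}_X),G)$. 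But your own write-up is then internally inconsistent: you claim ``the cohomological part of (3) then follows formally \dots\ surjective for $i=0$'', whereas surjectivity at $H^0$ genuinely requires $\coker\eta=0$; only the surjectivity of $H^i(\eta)$ for $i\ge1$ (and isomorphism for $i\ge2$) follows from the mere $0$-dimensionality of $\ker\eta$ and $\coker\eta$. So either establish $\Tor_1(\inext^1(F,\mathcal{O}_X),G)=0$ for torsion-free $G$ before using it, or weaken what you claim about $H^0$.

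For (4) with $G$ reflexive, the paper's device is more direct than your plan: since $K\subset L_1^\vee$ is torsion free and $G$ is reflexive, item (2) applied with the roles of $F$ and $G$ swapped gives $\Tor_1(K,G)=0$, hence $\ker\eta=0$; combined with the surjectivity of $\eta$ from (3), this yields the isomorphism. Your idea of using $\mathrm{pd}\,G\le1$ only kills $\Tor_{\ge2}(\,\cdot\,,G)$, which controls $\ker\eta=\Tor_2$ but says nothing about $\coker\eta=\Tor_1$, so along your route you would still need to invoke (3) to finish, exactly as the paper does.
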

\begin{proof}
Since $F$ is reflexive, it admits a resolution of the form
\begin{equation} \label{sqc1}
0 \longrightarrow L_1 \stackrel{\mu}{\longrightarrow} L_0 \longrightarrow F \longrightarrow 0 ,
\end{equation}
where $L_1$ and $L_0$ are locally free sheaves. Applying the functor $\ponto \otimes G$ to this sequence we get
$$ 0 \longrightarrow \Tor_1(F,G) \longrightarrow L_1\otimes G \stackrel{\mu\otimes\mathbf{1}_G}{\longrightarrow} L_0\otimes G \longrightarrow F\otimes G \longrightarrow 0 . $$ 
Note that $L_0\otimes G$ is torsion free, while the support of $\Tor_1(F,G)$ is contained in $\sing(F)\cap\sing(G)$. Then $\Tor_1(F,G) \subset L_1\otimes G$ is only possible if $\Tor_1(F,G)=0$. This proves (ii).

Dualizing the sequence in display \eqref{sqc1}, we obtain
\begin{align}\label{sqc1d}
0 \longrightarrow F^\vee \longrightarrow & L_0^\vee \stackrel{\mu^\vee}{\longrightarrow} K \longrightarrow 0, \\
\label{sqc1da}
0 \longrightarrow K \longrightarrow & L_{ 1}^\vee \longrightarrow \inext^1(F,\mathcal{O}_X) \longrightarrow 0,
\end{align}
where $K:=\img\mu^\vee$. On the one hand, we apply the functor $\ponto\otimes G$ to the sequence in display (\ref{sqc1d}) and obtain 
\begin{equation} \label{sqc2:tensor}
 0 \longrightarrow \Tor_1(K,G) \longrightarrow F^\vee\otimes G \longrightarrow L_0^\vee\otimes G \xrightarrow{\mu^\vee\otimes\mathbf{1}_G} K\otimes G \longrightarrow 0
\end{equation}
On the other hand, the apply the functor $\inhom(\ponto,G)$ to the sequence in display \eqref{sqc1} to obtain
\begin{equation} \label{sqc2:hom}
0 \longrightarrow \inhom(F,G) \longrightarrow L_0^\vee\otimes G 
\xrightarrow{\mu^\vee\otimes\mathbf{1}_G} { L_1^\vee\otimes G} \longrightarrow \inext^1(F,G) \longrightarrow 0.
\end{equation}
As tensor product preserves cokernels, we get that $\inext^1(F,G) \simeq \inext^1(F,\mathcal{O}_X) \otimes G$. This proves (i). 

The sequence (\ref{sqc2:hom}) also shows that $\ker \mu^\vee\otimes\mathbf{1}_G = \inhom(F,G) $ and the sequence in display (\ref{sqc2:tensor}) induces
$$ 
0 \longrightarrow \Tor_1(K,G) \longrightarrow F^\vee\otimes G \stackrel{\eta}{\longrightarrow} \inhom(F,G) \longrightarrow 0. 
$$
In the same fashion, we apply the functor $\ponto\otimes G$ to the sequence in display (\ref{sqc1da}) to obtain
\[
\Tor_2(\inext^1(F,\mathcal{O}_X),G) \simeq \Tor_1(K,G);
\]
if $\sing(F)\cap\sing(G)=\emptyset$, then $\Tor_2(\inext^1(F,\mathcal{O}_X),G)=0$ hence $\eta$ is an isomorphism. Alternatively, item (2) also implies that $\Tor_1(K,G)=0$ when $G$ is reflexive, since the sequence (\ref{sqc1d}) implies that $K$ is torsion free. This completes the proof of item (iv).

Moreover, $\Tor_1(K,G)$ is a zero-dimensional sheaf whenever non trivial, so the induced cohomology map
$$ H^i(F^\vee\otimes G) \longrightarrow H^{i}(\inhom(F,G)) $$
is surjective for $i=0$ and an isomorphism for $i\ge 1$. This completes the proof of (iii).

Finally, the item (v) is an immediate consequence of the spectral sequence for local to global Ext's. Indeed, if $h^i(F^\vee\otimes G)=0$ for $i>0$, then $h^i(\inhom(F,G))=0$ in the same range by item (iii). Since $\dim\inext^j(F,G)=0$ due to the fact that $F$ is reflexive, the only nontrivial entries in the second page of the spectral sequence $E_2^{ij}=H^i(\inext^j(F,G))$ occur for $(i,j)=(0,0),(0,1)$, thus, in addition, all differentials $d_2^{ij}$ must vanish. It follows that $E_2^{ij}=H^i(\inext^j(F,G))$ already converges in the second page, therefore $\Ext^1(F,G)=H^0(\inext^1(F,G))$ and $\Ext^k(F,G)=0$ for $k=2,3$.
\end{proof}

Next, we recall the following version of the Hirzebruch--Riemann--Roch theorem, compare with \cite[Lemma 6.1.3]{HL}:
\begin{equation} \label{chi(ef)}
\chi(E,F) := \sum_k (-1)^k \dim\Ext^k(E,F) = \int_X {\rm ch}(E)^\vee\cdot {\rm ch}(F)\cdot{\rm td}(X)
\end{equation}
If we know that $\Ext^k(E,F)=0$ for $k>1$ (situation described in the Lemma \ref{tec-lema}), then
\begin{equation} \label{dim-hom}
\dim\Hom(E,F) = \chi(E,F) + \dim\Ext^1(E,F).
\end{equation}

The calculation of $\dim\Ext^1(E,F)$ in our examples will rely on the following technical fact from commutative algebra.


\begin{lema}\label{lem:dimext}
Let $\kappa$ be a field and let $R = \kappa[x,y,z]_{(x,y,z)}$ be the local ring at the origin of $\mathbb{A}^3_\kappa$. Let $I\subset R$ be an ideal, then
\[
\dim_\kappa \Ext^3_R(R/I, R)\otimes_R I =
\begin{cases}
3, & \text{if $V(I)$ is the closed point;}\\
3, & \text{if $V(I)$ is a smooth rational curve with a simple embedded point;} \\
6, & \text{if $V(I)$ is a double point.}
\end{cases}
\]
\end{lema}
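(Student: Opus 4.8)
The plan is to treat $R$ as a regular, hence Gorenstein, local ring of dimension $3$ and to combine local duality with the complete intersection structure in the zero-dimensional case. The starting point is the identification, valid for any finitely generated $R$-module $M$,
$$ \Ext^3_R(M,R) \cong H^0_{\mathfrak{m}}(M)^\vee, $$
where $(-)^\vee = \Hom_\kappa(-,\kappa)$ and $H^0_{\mathfrak{m}}(M)$ is the maximal finite-length submodule of $M$; this is local duality for $R$, whose canonical module is $R$ itself. Applying it with $M = R/I$ reduces the determination of $E := \Ext^3_R(R/I,R)$ to understanding the punctual part of $R/I$. After a linear change of coordinates I would put each configuration in a normal form: $I = \mathfrak{m} = (x,y,z)$ in the first case; $I = (x,y^2,yz)$, the smooth curve $V(x,y)$ with a length-one embedded point, in the second; and $I = (x^2,y,z)$, a curvilinear (hence complete intersection) length-two scheme, in the third.

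For the first two cases the embedded part $H^0_{\mathfrak{m}}(R/I)$ has length one — this is precisely what \emph{the closed point} and \emph{a simple embedded point} mean, since from $0\to I_C/I\to R/I\to R/I_C\to 0$ with $R/I_C$ a smooth-curve quotient one gets $H^0_{\mathfrak{m}}(R/I)\cong I_C/I$, of length $1$. Hence $E \cong \kappa$ as an $R$-module, and consequently
$$ E \otimes_R I \cong (R/\mathfrak{m}) \otimes_R I \cong I/\mathfrak{m}I, $$
whose dimension equals the minimal number of generators of $I$. I would then verify that $I$ is minimally generated by exactly three elements in both normal forms: $\mathfrak{m}=(x,y,z)$ trivially, and $(x,y^2,yz)$ because none of its three generators lies in $\mathfrak{m}I$ (their lowest-degree parts are linearly independent modulo $\mathfrak{m}I$). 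This produces the value $3$.

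For the double point I would use the complete intersection structure directly: since $I = (x^2,y,z)$ is generated by a regular sequence, the Koszul complex resolving $R/I$ is self-dual, whence $E = \Ext^3_R(R/I,R) \cong R/I$ (equivalently, $R/I$ is Gorenstein Artinian and is its own canonical module). Therefore
$$ E \otimes_R I \cong (R/I) \otimes_R I \cong I/I^2, $$
and the conormal module $I/I^2$ of a codimension-three complete intersection is free of rank three over $R/I$. Hence $\dim_\kappa I/I^2 = 3\,\dim_\kappa(R/I) = 3\cdot 2 = 6$, which is the remaining case.

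The cohomological vanishings and dimension counts are routine; the one step requiring genuine care is the reduction to the normal form in the second case, namely checking that a smooth curve germ carrying a single simple embedded point is, up to a coordinate change, cut out by $(x,y^2,yz)$ irrespective of the direction of the embedded point. Concretely one shows that $\mathfrak{m}I_C\subset I\subset I_C$ with $\dim_\kappa I_C/I=1$, so that $I = (x)+\mathfrak{m}I_C$ after rotating the transverse plane, and in particular that it is minimally generated by three elements in every case. This is where I expect the main (still elementary) obstacle to lie; everything else follows formally from local duality and the complete intersection structure.
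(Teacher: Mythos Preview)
Your argument is correct. The route, however, differs from the paper's in a way worth noting. The paper proceeds by explicit free resolutions in each case: for the closed point and the double point it writes down the Koszul complex on the generating regular sequence and reads off $\Ext^3_R(R/I,R)\cong R/I$, then computes $I/I^2$ by hand; for the curve with a simple embedded point it uses the normal form $I=(x^2,xz,y)$ (the same as your $(x,y^2,yz)$ after swapping $x\leftrightarrow y$), writes an explicit length-three resolution, and reads off $\Ext^3_R(R/I,R)\cong R/\mathfrak m$ from the leftmost differential before computing $I/\mathfrak m I$ directly. Your treatment replaces the ad hoc resolution in the second case by local duality, which immediately gives $\Ext^3_R(R/I,R)\cong H^0_{\mathfrak m}(R/I)^\vee\cong\kappa$ and thereby unifies the first two cases: in both, the answer is the minimal number of generators of $I$. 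For the double point you invoke the conormal freeness of a complete intersection rather than exhibiting a basis of $I/I^2$. The gain of your approach is conceptual economy and the uniform handling of cases (1) and (2); the paper's gain is that it is entirely self-contained and avoids appealing to local duality or Matlis duality, which some readers might prefer in an appendix lemma. Your caution about the normal form in case (2) is well placed but, as you outline, is settled by the observation $\mathfrak m I_C\subset I\subsetneq I_C$ with $\dim_\kappa I_C/I=1$, so that a rotation in the plane transverse to the curve brings $I$ to $(x)+\mathfrak m I_C=(x,y^2,yz)$.
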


\begin{proof}
First assume that $I = (x,y,z)$ the maximal ideal. Therefore $\Ext^3_R(R/I, R) = R/I$ and 
\[
\Ext^3_R(R/I, R)\otimes_R I = \frac{(x,y,z)}{(x,y,z)^2} \simeq \kappa^3.
\]

Next assume that $V(I)$ is a smooth rational curve with a simple embedded point. Up to automorphism of $R$ we can suppose that $I = (x,y)\cap (x^2,y,z) = (x^2,xz,y)$ which gives the following resolution
\[
0 \longrightarrow R \xrightarrow{\begin{pmatrix}
-y \\ -x \\ z
\end{pmatrix} } R^3 \xrightarrow{
\begin{pmatrix}
z & 0 & y \\ -x & y & 0 \\ 0 & -xz & -x^2 
\end{pmatrix} 
} R^3 \xrightarrow{
\begin{pmatrix}
x^2 & xz & y
\end{pmatrix}
} R \longrightarrow R/I \longrightarrow 0
\]
Therefore $\Ext^3_R(R/I, R)$ is determined by the leftmost map in this sequence, indeed it is the cokernel of its dual, i.e., $\Ext^3_R(R/I, R) = R/(x,y,z)$. It follows that
\[
\Ext^3_R(R/I, R)\otimes_R I = \frac{(x^2,xz,y)}{(x^2,xz,y)\cdot (x,y,z)} = \frac{(x^2,xz,y)}{(x^3,x^2z,x^2z,xy,y^2,yz) } \simeq \kappa^3.
\]

Now consider that $V(I)$ is a double point. We will follow the same path. Up to automorphism, $I = (x,y,z)^2 + (x,y) = (x,y,z^2)$. Then we have the resolution
\[
0 \longrightarrow R \xrightarrow{\begin{pmatrix}
x \\ y \\ z^2
\end{pmatrix} } R^3 \xrightarrow{
\begin{pmatrix}
0 & z^2 & -y \\ -z^2 & 0 & x \\ y & -x & 0 
\end{pmatrix} 
} R^3 \xrightarrow{
\begin{pmatrix}
x & y & z^2
\end{pmatrix}
} R \longrightarrow R/I \longrightarrow 0
\]
and it follows that $\Ext^3_R(R/I, R) = R/I$. Therefore
\[
\Ext^3_R(R/I, R) \otimes_R I = \frac{I}{I^2} = \frac{(x,y,z^2)}{(x^2,xy,xz^2,y^2,yz^2,z^4)} \simeq \kappa^6.
\]
We can give a basis taking the classes of $\{ x,y,z^2,xz,yz,z^3\}$.
\end{proof}


\end{document}